\documentclass[11pt]{article}

\pdfoutput=1

\usepackage[backend=biber,
            bibstyle=phys,
            bibencoding=utf8,
            texencoding=utf8,
            sorting=none,
            url=false,
            doi=false,
            eprint=true,
            sortcites=true,
            citestyle=numeric-comp]{biblatex}
\DeclareFieldFormat*{title}{\mkbibitalic{#1}}
\DeclareFieldFormat{labelnumberwidth}{\mkbibbrackets{#1}~~}
\renewbibmacro*{volume+number+eid}{%
  \printfield{volume}%
  \iffieldundef{number}{}{%
  \printtext{(}%
  }%
  \printfield{number}%
  \iffieldundef{number}{}{%
  \printtext{)}%
  }%
  \setunit{\addcomma\space}%
  \printfield{eid}}


\usepackage{amsfonts,amsthm,amsmath,amssymb}
\usepackage{latexsym,xcolor}
\usepackage{mathrsfs}
\usepackage{pifont}
\usepackage{tikz}
\usepackage[all]{xy}
\usepackage[breaklinks]{hyperref}
\usepackage{authblk}

\usepackage[utf8]{inputenc}
\usepackage[T1]{fontenc}

\newtheoremstyle{rem}{3pt}{3pt}{}{}
{\bfseries}{.}{.5em}{}

\newtheorem{theo}{Theorem}[section]
\newtheorem*{theo*}{Theorem}

\newtheorem{prop}[theo]{Proposition}

\newtheorem{coro}[theo]{Corollary}

\theoremstyle{rem}

\theoremstyle{definition}

\newtheorem*{term*}{Notation/Terminology}

\renewcommand{\sl}{sl}

\newcommand{\pFq}[5]{{
{}_{#1}F_{#2}\left( \genfrac{}{}{0pt}{0}{#3}{#4}\Big| #5\right)
}}

\usepackage{geometry}
 \geometry{
 a4paper,
 total={210mm,297mm},
 left=20mm,
 right=20mm,
 top=20mm,
 bottom=30mm,
 footskip=20mm
 }

\title{\bf Matrix elements of $SO(3)$ in $\sl_3$ representations \\
as bispectral multivariate functions}

\renewcommand*{\Affilfont}{\normalsize\small}
\author[1]{Nicolas Cramp\'e\,}
\author[2]{Julien Gaboriaud\,}
\author[3]{Lo\"ic Poulain d'Andecy\,}
\author[4,5]{Luc Vinet\,\vspace{.5em}}
\affil[1]{Institut Denis-Poisson CNRS/UMR 7013 - Universit\'e de Tours - Universit\'e
d'Orl\'eans,
\newline\vspace{.9em}
Parc de Grandmont, 37200 Tours, France.}
\affil[2]{\vspace{.9em}Graduate School of Informatics, Kyoto University, Sakyo-ku, Kyoto,
606-8501, Japan.
}
\affil[3]{Laboratoire de math\'ematiques de Reims UMR 9008,
Universit\'e de Reims Champagne-Ardenne,
\newline\vspace{.9em}
Moulin de la Housse BP 1039, 51100 Reims, France.}
\affil[4]{Centre de Recherches Math\'ematiques, Universit\'e de Montr\'eal, P.O. Box 6128,
\newline\vspace{.9em}
Centre-ville Station, Montr\'eal (Qu\'ebec), H3C 3J7, Canada.}
\affil[5]{IVADO, Montr\'eal (Qu\'ebec), H2S 3H1, Canada.\vspace{1.5em} }
{
 \makeatletter
 \renewcommand\AB@affilsepx{: \protect\Affilfont}
 \makeatother
 \affil[ ]{E-mail addresses}
 \makeatletter
 \renewcommand\AB@affilsepx{, \protect\Affilfont}
 \makeatother
 \affil[1]{crampe1977@gmail.com}
 \affil[2]{julien.gaboriaud.57f@st.kyoto-u.ac.jp}
 \affil[3]{loic.poulain-dandecy@univ-reims.fr}
 \affil[4]{vinet@crm.umontreal.ca}
}
\date{\today}

\begin{document}
\maketitle

\hrule
\begin{abstract}
We compute the matrix elements of $SO(3)$ in any
finite-dimensional irreducible representation of $\sl_3$.
They are expressed in terms of a double sum of
products of Krawtchouk and Racah polynomials which generalize the Griffiths--Krawtchouk polynomials.
Their recurrence and difference relations are obtained as byproducts of our construction.
The proof is based on the decomposition of a general three-dimensional rotation in terms
of elementary planar rotations and a transition between two embeddings of $\sl_2$
in $\sl_3$.
The former is related to monovariate Krawtchouk polynomials
and the latter, to monovariate Racah polynomials.
The appearance of Racah polynomials in this context is algebraically explained
by showing that the two $\sl_2$ Casimir elements related to the two embeddings of $\sl_2$
in $\sl_3$ obey the Racah algebra relations.
We also show that these two elements generate the centralizer in $U(sl_3)$ of the Cartan subalgebra
and its complete algebraic description is given.
\end{abstract}
\hrule

\section{Introduction}\label{sec:introduction}

Consider the particular irreducible representation of the Lie algebra $\sl_3$ given by the $n^\text{th}$ symmetric power of its three-dimensional defining representation.
The group $SO(3)$ acts naturally on this representation and its matrix elements are given (up to some normalization) by
the bivariate Griffiths--Krawtchouk polynomials. This was proven in \cite{GenestVinetetal2013b} by using oscillator algebras to realize the $n^\text{th}$ symmetric power representation.

The goal of this paper is to consider an arbitrary finite-dimensional irreducible representation of $\sl_3$. The group $SO(3)$ still acts naturally on this representation
and our main result is an expression of its matrix elements. They are expressed as
a novel family of $3$-variable functions, enjoying many nice properties: they are bispectral, orthogonal and are
given as sums of products of univariate Krawtchouk and Racah polynomials.

\subsection*{On Krawtchouk and Racah polynomials}
The Krawtchouk and Racah polynomials have appeared in various aspects of representation
theory.
The univariate Krawtchouk polynomials appear as matrix elements of
irreducible representations of $SU(2)$ \cite{Koornwinder1982}.
They also appear as $3j$ and $6j$ coefficients of the oscillator algebra
\cite{VilenkinKlimyk1991, Vanderjeugt1997} and as spherical functions on wreath products
of symmetric groups \cite{Dunkl1976}.
Meanwhile, the Racah polynomials have been identified as the $6j$ coefficients of the
algebra $\sl_2$ \cite{Wilson1980, NikiforovUvarovetal1985} and an algebraic interpretation
has been given in \cite{GranovskiiZhedanov1988}.

These objects and their connections have been extended to higher dimensions in multiple
ways.
Griffiths introduced bivariate (and multivariate) Krawtchouk polynomials in 1971
\cite{Griffiths1971}. Twenty years later, Tratnik defined multivariate Racah polynomials
\cite{Tratnik1991} from which $d$-variable Krawtchouk polynomials could be obtained
through a limit.
The $d$-variable Griffiths--Krawtchouk polynomials depend on $\tfrac{1}{2}d(d+1)$
parameters while the $d$-variable Tratnik--Krawtchouk polynomials depend on $d$
parameters. Although one could expect that the Krawtchouk polynomials of Griffiths type
are more general than those of Tratnik type due to their larger number of parameters, the
precise relationship between the two remained unclear for quite some time. This is in part
due to the very different parametrizations of both families.
Things became clear in \cite{GenestVinetetal2013b} where an algebraic
interpretation cemented the fact that the $\tfrac{1}{2}d(d+1)$ parameters play the role of
the Euler angles that parametrize general $SO(d+1)$ rotations. The Tratnik--Krawtchouk
case could then be seen as a special case where some Euler angles vanish.
The Griffiths--Krawtchouk polynomials have also appeared in the study of
character algebras and been given an expression in terms of Aomoto--Gel'fand series
\cite{MizukawaTanaka2004}, the bivariate case was rediscovered from a probabilistic
perspective \cite{HoareRahman2008} and various algebraic interpretations of the bivariate
and multivariate Krawtchouk polynomials have been presented \cite{Rosenblyum1987,
Zhedanov1997a, IlievTerwilliger2012, Iliev2012, CrampevandeVijveretal2020}.
The multivariate Krawtchouk polynomials have also been studied in the context of birth and
death processes \cite{Milch1968}, Markov chains (see \cite{DiaconisGriffiths2014} for a
review) and their bispectrality has been established \cite{GeronimoIliev2010,
GenestVinetetal2013b}.  As for the multivariate Racah polynomials, they have appeared as
$3nj$ coefficients of the algebra $\sl_2$ \cite{DeBievandeVijver2020} and the bivariate
Racah polynomials have been studied in details in \cite{CrampeFrappatetal2022}.
We note that, so far, the Racah polynomials had never been given a direct
interpretation in terms of matrix elements of representations of the rotation group,
even if such a result could be deduced from \cite{ChaconMoshinsky1966, PluharSmirnovetal1985,
GelfandZelevinsky1986} where the transition coefficients between different Gelfand--Tsetlin bases are related to $6j$-symbols of $\sl_2$.
We use a more modern approach involving explicitly the Racah polynomials and hope that this will pave the way for a subsequent multivariate extension.

In this paper, the matrix elements of $SO(3)$ rotations in any representation of $\sl_3$ are
expressed as ``hybrid'' multivariate Krawtchouk and Racah polynomials.
Here is what we mean by ``hybrid''.
In the $d$-variable extensions of Tratnik (respectively Griffiths), the multivariate Racah
(Krawtchouk) polynomials can be expressed as sums of products of univariate Racah
(Krawtchouk) polynomials.
Meanwhile, the family that we obtain here is written as sums of products of both
univariate Racah and Krawtchouk polynomials.

Such hybrid multivariate polynomials appeared before (see for example \cite{Dunkl1976,
TarnanenAaltonenetal1985, GenestMikietal2014}), but they did not
attract that much attention.
Nevertheless, they have recently resurfaced as a byproduct of the definition of bivariate
$P$-polynomial association schemes \cite{BernardCrampeetal2023, CrampeVinetetal2023}.
Hybrid multivariate polynomial families seem to be more and more an important aspect of
future research in the field of special functions and their appearance in a very natural
setting in the present paper is an indication of that.

\subsection*{On the Racah algebra and bispectrality}
The Racah polynomials (like all families of the Askey scheme) possess a bispectrality
property \cite{Leonard1982,Grunbaum2001} as
they are simultaneously eigenfunctions of two eigenvalue problems in two
different variables.
The first eigenvalue problem is written as the three term recurrence relation in $n$, with
eigenvalue in $x$, and the second one as the difference equation in $x$, with eigenvalue
in $n$.
We refer to the recurrence and difference operators as the bispectral operators.

When both taken in the variable or degree representation, these operators do not commute
with each other and the algebraic relations that they obey are the so-called Racah algebra
relations. These read
\begin{align}\label{eq:racah3}
\begin{aligned}
 {}[K_2,[K_1,K_2]]&={K_2}^2+\{K_1,K_2\}+dK_2+e_1\,,\\
 [[K_1,K_2],K_1]&={K_1}^2+\{K_1,K_2\}+dK_1+e_2\,,
\end{aligned}
\end{align}
where $K_1$, $K_2$ are generators of this algebra and $d$, $e_1$ and $e_2$ are some
parameters (or central elements).

The Racah relations first appeared in the study of angular momentum recoupling
\cite{LevyLeblondLevyNahas1965} and were connected to the Racah polynomials in
\cite{GranovskiiZhedanov1988}.
Hence, whenever the Racah algebra appears, the Racah polynomials will be lurking, and vice-versa.
Remarkably, even though, the Racah algebra is quadratic, it can be studied in
detail. Its representation theory is well-known
\cite{HuangBocktingConrad2020,	
HuangBocktingConrad2020a},	
it has been embedded in various algebraic structures
\cite{GenestVinetetal2013,	
GenestVinetetal2015,		
CorreaDelolmoetal2021,		
TerwilligerVidunas2004,		
Koornwinder2007a,		
CrampePoulaindAndecyetal2019,	
BocktingConradHuang2020,	
GaboriaudVinetetal2018} and it encapsulated the properties of the eponymous polynomials.	
It has been connected to physics and plays in particular an important role as the
symmetry algebra of the generic superintegrable system on the $2$-sphere
\cite{KalninsMilleretal2007} (see also \cite{GenestVinetetal2014} for a review).
Generalizations of this algebra to describe multivariate Racah polynomials lead to the
higher rank Racah algebras, which have also been extensively studied
\cite{
KalninsMilleretal2011,		
Post2015,			
DeBieGenestetal2017,		
DeBievandeVijver2020,		
CrampevandeVijveretal2020,	
CrampeGaboriaudetal2021,	
CampoamorstursbergLatinietal2023}.	

The Racah polynomials will appear here in a new fashion as matrix elements of some
particular rotation in $sl_3$ representations. From the above, one would expect the
existence of a realization of the Racah algebra in $U(\sl_3)$, and indeed we exhibit
explicitly such a realization.

\subsection*{Outline}
We first define the algebra $U(\sl_3)$ and introduce its Gelfand--Tsetlin
basis in Section \ref{sec:usl3_gt}.
In Section \ref{sec:problem}, we pose the main problem precisely.
Inner automorphisms of $U(\sl_3)$ corresponding to $SO(3)$ rotations are introduced
and the strategy to compute the matrix elements by decomposing the general
rotation as a product of two types of rotations is explained.
In Section \ref{sec:Rz_krawtchouk}, we look at a first type of rotation, from which we
extract Krawtchouk polynomials.
A second type of rotation  analyzed in Section \ref{sec:T_racah} leads to Racah
polynomials.
Explicit expressions for the matrix elements associated to a generic rotation are
presented in Section \ref{sec:Rgeneral}.
Special cases of interest are displayed in Section \ref{sec:examples}.
In Section \ref{sec:Racah-algebra}, the realization of the Racah algebra in
$U(\sl_3)$ is presented, its connection with a centralizer is explained and its
Hilbert--Poincar\'e series is computed.
Closing remarks and perspectives conclude the paper.

\section{The algebra $U(sl_3)$ and its Gelfand--Tsetlin basis}\label{sec:usl3_gt}

Let us first introduce the algebra $U(\sl_3)$ and its finite-dimensional
irreducible representations in the Gelfand--Tsetlin bases.

\paragraph{Definition.} The enveloping algebra  $U(sl_3)$ of the Lie algebra $sl_3$ is
generated by the elements $e_{ij}$ with $1\leq i,j \leq 3$ satisfying the defining relations
\begin{align}
 [e_{ij}, e_{k\ell}] = \delta_{jk}e_{i\ell}  - \delta_{i\ell}e_{jk}\,,\qquad
 \sum_{i=1}^3 e_{ii}=0 \,.
\end{align}
The following Casimir elements generate the center of $U(sl_3)$:
\begin{equation} \label{CEsl3}
C_2=\sum_{i,j=1}^3 e_{ij}e_{ji}\qquad \text{and}\qquad C_3=\sum_{i,j,k=1}^3
e_{ij}e_{jk}e_{ki}\,.
\end{equation}
We shall take $J$ to be the Casimir element of the $sl_2$ subalgebra generated by
$e_{11}-e_{22}$, $e_{12}$ and $e_{21}$:
\begin{equation}\label{eq:J}
 J=\frac{(e_{11}-e_{22})^2+2(e_{11}-e_{22})}{4}+e_{21}e_{12}\, .
\end{equation}

\paragraph{Finite-dimensional irreducible representations.}
Finite-dimensional irreducible representations of $sl_3$ are in one-to-one correspondence
with $3$-tuples of complex numbers $\lambda=(\lambda_{31}, \lambda_{32},\lambda_{33})$,
called the highest weight, such that
$\lambda_{31}- \lambda_{32}\in\mathbb{Z}_{\geq 0}$,
$\lambda_{32}-\lambda_{33}\in\mathbb{Z}_{\geq 0}$ and $\lambda_{31}+
\lambda_{32}+\lambda_{33}=0$.
The representation associated to the highest weight $\lambda$ contains a unique (up to
normalization) non-zero vector $\xi$, called the highest weight vector, such that
\begin{equation}
 e_{ii}\, \xi = \lambda_{3,i}\, \xi \qquad
\text{for } 1\leq i \leq 3\,,  \qquad\text{and} \qquad  e_{ij}\, \xi =0\qquad
\text{for } 1\leq i<j \leq 3\, .
\end{equation}
This representation is also characterized by a two-row Young tableau with
$\lambda_{31}-\lambda_{33}$ boxes in the first row and $\lambda_{32}-\lambda_{33}$ boxes
in the second row.

This representation can also be described by a Gelfand--Tsetlin (GT) pattern which is
given by $\Lambda=( \lambda_{11}, \lambda_{21} , \lambda_{22}; \lambda_{31}, \lambda_{32},
\lambda_{33} )$ with the conditions
\begin{align}
\lambda_{31} - \lambda_{21},\quad \lambda_{21}- \lambda_{32},\quad \lambda_{32} -
\lambda_{22},\quad \lambda_{22}-  \lambda_{33},\quad  \lambda_{21}-   \lambda_{11},\quad
\lambda_{11}-\lambda_{22}\ \in\mathbb{Z}_{\geq 0}\,.
\end{align}
In the following the three last numbers in $\Lambda$ are fixed and we write only
$\Lambda=( \lambda_{11}, \lambda_{21} , \lambda_{22} )$.
The set of GT patterns for this highest weight $\lambda$ is denoted $\mathcal P_\lambda$.
For a GT pattern $\Lambda$, one associates the representation basis vectors, called GT
vectors (see \cite{Molev2006} and references therein)\footnote{
For later convenience, we change the normalization of the vectors in comparison with
\cite{Molev2006}:\\
$\xi_{\lambda\ old}\to
\frac{(\lambda_{21}-\lambda_{32})!(\lambda_{21}-\lambda_{33}+1)!
(\lambda_{22}-\lambda_{33})!}{(\lambda_{32}-\lambda_{22})!}\xi_\lambda$.}
\begin{equation}
 \xi_\Lambda= \left| \begin{array}{ccccc}
 \lambda_{31} & & \lambda_{32} & & \lambda_{33} \\
 & \lambda_{21} & & \lambda_{22} & \\
 && \lambda_{11}\end{array}\right\rangle.
\end{equation}
The $sl_3$ generators $e_{ij}$ act as follows on these vectors:
\begin{subequations}\label{eq:actGT}
 \begin{align}
 &e_{11}\xi_\Lambda= \lambda_{11} \xi_\Lambda \,, \qquad
  e_{22}\xi_\Lambda= (\lambda_{21} +\lambda_{22}- \lambda_{11} ) \xi_\Lambda\,, \qquad
  e_{33}\xi_\Lambda= -(\lambda_{21} +\lambda_{22} )  \xi_\Lambda \,,\\
   &e_{12}\xi_\Lambda = (\lambda_{21}-\lambda_{11}) (\lambda_{11}-\lambda_{22}+1) \xi_{\Lambda+\delta^{11}}\,,\qquad
     e_{21}\xi_\Lambda =\xi_{\Lambda-\delta^{11}}\,,\\
     &e_{23}\xi_\Lambda =\frac{\lambda_{31}-\lambda_{21}   }{\lambda_{21}-\lambda_{22}+1}\xi_{\Lambda+\delta^{21}}
     +\frac{ \lambda_{31}-\lambda_{22}+1}{\lambda_{21}-\lambda_{22}+1}\xi_{\Lambda+\delta^{22}}\,,\\
      &e_{32}\xi_\Lambda =\frac{(\lambda_{21}-\lambda_{32})(\lambda_{21}-\lambda_{33}+1)(\lambda_{21}-\lambda_{11})   }{\lambda_{21}-\lambda_{22}+1}\xi_{\Lambda-\delta^{21}}\nonumber\\
   &\qquad \qquad +\frac{(\lambda_{11}-\lambda_{22}+1)(\lambda_{32}-\lambda_{22} +1)(\lambda_{22}-\lambda_{33})  }{\lambda_{21}-\lambda_{22}+1}\xi_{\Lambda-\delta^{22}}\,,
\end{align}
where  $\xi_{\Lambda \pm \delta^{ij}}$ is either the basis
element associated to the GT pattern $\Lambda \pm \delta^{ij}$
where the value of $\lambda_{ij}$ has become $\lambda_{ij} \pm 1$, or $0$
if the resulting pattern is not a valid GT pattern.
One can deduce the actions of the remaining generators:
\begin{align}
  &e_{13}\xi_\Lambda =\frac{(\lambda_{11}-\lambda_{22}+1)(\lambda_{31}-\lambda_{21})}
 {\lambda_{21}-\lambda_{22}+1}\xi_{\Lambda+\delta^{21}+\delta^{11}}
  -\frac{(\lambda_{21}-\lambda_{11})( \lambda_{31}-\lambda_{22}+1)}
 {\lambda_{21}-\lambda_{22}+1}\xi_{\Lambda+\delta^{22}+\delta^{11}}\,,\\
  &e_{31}\xi_\Lambda =\frac{(\lambda_{21}-\lambda_{32})(\lambda_{21}-\lambda_{33}+1)}
 {\lambda_{21}-\lambda_{22}+1}\xi_{\Lambda-\delta^{21}-\delta^{11}}
  -\frac{(\lambda_{32}-\lambda_{22} +1)(\lambda_{22}-\lambda_{33})}
 {\lambda_{21}-\lambda_{22}+1}\xi_{\Lambda-\delta^{22}-\delta^{11}}\,.
\end{align}
\end{subequations}

The Casimir elements of $U(sl_3)$ are proportional to the identity in this representation
and, using \eqref{eq:actGT}, one gets
\begin{subequations}
\begin{align}\label{Casimir-spec}
&C_2\, \xi_\Lambda= 2 (\lambda_{31}^2+\lambda_{31}\lambda_{32}
 + \lambda_{32}^2+2 \lambda_{31}+\lambda_{32}) \xi_\Lambda\,,\\
&C_3\, \xi_\Lambda=3 \lambda_{31} (1 - \lambda_{32})
 (2 + \lambda_{31} + \lambda_{32}) \xi_\Lambda\,.
\end{align}
\end{subequations}
The element $J$ defined by \eqref{eq:J} acts diagonally on this basis
\begin{equation}
 J\, \xi_\Lambda= \frac{1}{4}(\lambda_{21}-\lambda_{22})(\lambda_{21}-\lambda_{22}+2) \, \xi_\Lambda\,.
\end{equation}
In fact,  this eigenvalue of the $\sl_2$ Casimir element $J$, with the ones  of the hypercharge $Y=\frac{1}{3}(e_{11}+e_{22}-2e_{33})$ and the
Cartan element $H=e_{11}-e_{22}$:
\begin{align}
 &Y\, \xi_\Lambda= (\lambda_{21}+\lambda_{22}) \, \xi_\Lambda\,,\\
 &H\, \xi_\Lambda=(2\lambda_{11}-\lambda_{21}-\lambda_{22}) \, \xi_\Lambda\,,
\end{align}
completely
characterize the vectors $ \xi_\Lambda$ (if the highest weight $\lambda$ is given).

Let us introduce the normalized vectors
\begin{equation}\label{eq:zeta}
 \zeta_\Lambda= \frac{1}{N_\Lambda} \xi_\Lambda\,,
\end{equation}
with
\begin{align}
 \left( N_\Lambda\right)^2&=\frac{1}{\lambda_{21}-\lambda_{22}+1} (\lambda_{21}-\lambda_{11})!(\lambda_{31}-\lambda_{21})!(\lambda_{31}-\lambda_{22}+1)!(\lambda_{31}-\lambda_{32})!(\lambda_{32}-\lambda_{33})! \nonumber\\
 &\times\frac{(\lambda_{22}-\lambda_{33})!(\lambda_{31}-\lambda_{33}+1)!(\lambda_{21}-\lambda_{32})!(\lambda_{21}-\lambda_{33}+1)!}
 {(\lambda_{11}-\lambda_{22})!(\lambda_{32}-\lambda_{22})!}.
\end{align}
In this basis, the anti-automorphism $\star$ of $sl_3$ defined by $\star: e_{ij} \mapsto
e_{ji}$ corresponds to the transposition.

\section{The general problem: rotations and change of basis coefficients}
\label{sec:problem}

In this Section, we introduce the inner automorphisms of $\sl_3$ associated to elements of the group $SO(3)$.
The matrix elements of $SO(3)$ are interpreted  as the overlap coefficients between different bases related by these inner automorphisms.
We provide the decomposition of a general rotation of $SO(3)$ into a product of two types
of rotations $R^{z}$ and $T$, which will be examined separately in the following two
Sections.

\paragraph{Inner automorphism.}
Let $R\in SO(3)$ be a $3\times 3$-matrix whose real entries satisfy
\begin{equation}
 \sum_{j=1}^3 R_{ij} R_{kj} =\sum_{j=1}^3 R_{ji} R_{jk} = \delta_{ik} \,.
\end{equation}
The map $\Psi_R$, labeled by $R\in SO(3)$ and acting on $g\in sl_3$ by $\Psi_R(g)=RgR^{-1}$, provides an automorphism of $U(sl_3)$ given explicitly by:
\begin{equation}
 \Psi_R\ :\ e_{ij} \mapsto  \sum_{k,\ell=1}^3 R_{ki}R_{\ell j}e_{k\ell}\,.
\end{equation}
One gets that $ \Psi_R \circ \Psi_{R'}= \Psi_{RR'}$ for any $R,R'\in SO(3)$.

From now on, let us fix a highest weight $\lambda=(\lambda_{31},
\lambda_{32},\lambda_{33})$ and let $\xi_\Lambda$ be vectors of the representation space.
For any rotation $R$, we denote $\rho$ the operator representing $R^{-1}$. It satisfies:
\begin{equation}\label{eq:fond1}
 \Psi_R(g)  \cdot  \xi_\Lambda = \rho^{-1}  \cdot  g  \cdot  \rho  \cdot  \xi_\Lambda\,\qquad \text{for any $g \in sl_3$} \, ,
\end{equation}
where
\begin{equation}
 \rho \cdot \xi_\Lambda =\sum_{\Lambda'\in \mathcal{P}_\lambda} \rho_{\Lambda',\Lambda} \, \xi_{\Lambda'}\,.
\end{equation}
This means that $\Psi_R(g)$ and $g$ are two equivalent representations and the change of
basis is provided by $\rho$. The coefficients $\rho_{\Lambda',\Lambda}$ are
the matrix elements of $R^{-1}$ in the Gelfand--Tsetlin basis.

In the basis of normalized vectors $\zeta_\Lambda$ \eqref{eq:zeta}, the matrices
corresponding to $\rho$ are orthogonal.
Therefore, due to the change of normalization, one gets the following orthogonality
relation
\begin{equation} \label{eq:ortho}
\sum_{\Lambda'\in \mathcal{P}_\lambda} \left(N_{\Lambda'}\right)^2\,  \rho_{\Lambda',\Lambda}\, \rho_{\Lambda',\Lambda''} =(N_{\Lambda})^2\, \delta_{\Lambda,\Lambda''}\,.
\end{equation}

\paragraph{Change of basis for any rotation.}
The goal of this paper is to provide an explicit formula for the entries of $\rho$ for any
$R$ and any representation.
When the representation is a symmetric representation of $U(sl_3)$ (\textit{i.e.}
$\lambda_{32}=\lambda_{33}$), the entries of $\rho$ are given in terms of
the Griffiths--Krawtchouk polynomials \cite{IlievTerwilliger2012, GenestVinetetal2013b}.
As explained in the introduction, we generalize this result to any finite-dimensional
irreducible representation of $\sl_3$.

Following \cite{GenestVinetetal2013b}, to compute the change of basis $\sigma$ for any $S
\in SO(3)$, we decompose $S$ into three elementary rotations: two rotations around the $z$
axis and one around the $y$ axis.
\begin{equation}\label{eq:S}
 {S}=R^{z}_\chi R^{y}_\theta  R^{z}_\phi\,,
 \end{equation}
 with
 \begin{equation}
R^z_\phi=\begin{pmatrix}
                                  \cos(\phi) & \sin(\phi) &0\\
                                  -\sin(\phi)& \cos(\phi)&0\\
                                  0&0 &1
                                 \end{pmatrix},  \qquad\qquad
  R^y_\theta=\begin{pmatrix}
                                  \cos(\theta) & 0 & -\sin(\theta) \\
                                  0& 1 & 0 \\
                                 \sin(\theta)& 0 & \cos(\theta)
                                 \end{pmatrix}\,.
 \end{equation}
The rotations around the $z$ axis are easy to deal with since they leave the Casimir
element  $J$ of $sl_2$ invariant and the calculation is done in Section \ref{sec:Rz_krawtchouk}.
For the rotation around the $y$ axis, the computation is more involved but the idea
consists in writing this rotation as follows:
\begin{equation}
 R^y_\theta= T R^z_{\theta} T^{-1}\,,
\end{equation}
where
\begin{equation}
 T=\begin{pmatrix}
  1 & 0 &0 \\
   0& 0&1\\
   0 &-1 &0
  \end{pmatrix}\,.
\end{equation}
The change of basis $\tau$ associated to the transformation $T$ is computed in Section
\ref{sec:T_racah} in terms of Racah polynomials.

After the changes of basis $\rho_\phi$ and $\tau$ associated to $R^z_\phi$ and $T$ have
been computed, one can write the whole change of basis $\sigma=\rho_\phi\, \tau^{-1}\,
\rho_{\theta}\, \tau\, \rho_\chi$ associated to the rotation ${S}=R^{z}_\chi T R^z_{\theta} T^{-1}   R^{z}_\phi\,,$ since
\begin{align}
   \Psi_{S}(g)  \cdot  \xi_\Lambda &=\Psi_{R^z_\chi T R^z_\theta T^{-1} R^z_\phi} (g)  \cdot  \xi_\Lambda  =\Psi_{R^z_\chi}\Bigg(\Psi_{T}\Big(\Psi_{R^z_{\theta}}\big(\Psi_{T^{-1}}(\Psi_{R^z_\phi} (g))\big)\Big)\Bigg)  \cdot  \xi_\Lambda \nonumber\\
&=  (\rho_\phi\, \tau^{-1} \, \rho_{\theta}\, \tau\, \rho_\chi)^{-1}\, \cdot g \cdot (\rho_\phi\, \tau^{-1} \, \rho_{\theta}\, \tau\, \rho_\chi) \cdot \xi_\Lambda \, .
\end{align}
We shall show that this full transformation is given as a double sum of a product of three Krawtchouk
and two Racah polynomials (see Proposition \ref{pr:res}).

\section{The change of basis for $R^z_\phi$ and the Krawtchouk polynomials}
\label{sec:Rz_krawtchouk}
In this section, we focus on the case where $R=R^z_\phi$ and we denote the change of basis
by $\rho$.
Imposing relation \eqref{eq:fond1} for $g=Y$, $H$ and $J$ constrains the operator $\rho$.
Let us remark that $\Psi_{R}(Y)=Y$ and $\Psi_{R}(J)=J$, therefore relation
\eqref{eq:fond1} reduces to
\begin{equation}
 Y\cdot \rho \cdot \xi_\Lambda =  \rho \cdot Y\cdot \xi_\Lambda\qquad \text{and}\qquad
 J\cdot \rho\cdot \xi_\Lambda =  \rho \cdot J \cdot \xi_\Lambda\,.
\end{equation}
Using the explicit expressions of the action of $sl_3$ on the GT basis, one gets
\begin{align}
\begin{aligned}
 0&=\rho_{\Lambda'\Lambda}(\lambda'_{21}+\lambda'_{22}-\lambda_{21}-\lambda_{22})\,, \\
 0&=\rho_{\Lambda'\Lambda}( (\lambda'_{21}-\lambda'_{22})(\lambda'_{21}-\lambda'_{22}+2)
    - (\lambda_{21}-\lambda_{22})(\lambda_{21}-\lambda_{22}+2)  )\,,
\end{aligned}
\end{align}
which imply that
\begin{equation}\label{eq:res1}
 \rho_{\Lambda'\Lambda}= \delta_{\lambda'_{21},\lambda_{21}}\delta_{\lambda'_{22},\lambda_{22}}\  r(\lambda_{11},\lambda'_{21},\lambda'_{11},\lambda'_{22}) \, .
\end{equation}
One also gets
\begin{equation}
\Psi_{R}(H)=\left(\cos^2(\phi)- \sin^2(\phi)\right) H
 -2\cos(\phi)\sin(\phi)( e_{12} +e_{21} )\,.
\end{equation}
Relation \eqref{eq:fond1} for $g=H$ provides the following constraint (and using relation
\eqref{eq:res1}):
\begin{align}
 (2\lambda'_{11}-\lambda'_{21}-\lambda'_{22}) \rho_{\Lambda',\Lambda}=&
 -2\cos(\phi)\sin(\phi)(\lambda_{21}-\lambda_{11})(\lambda_{11}-\lambda_{22}+1)\rho_{\Lambda',\Lambda+\delta^{11}}\nonumber \\
 &+(\cos^2(\phi)- \sin^2(\phi)) (2\lambda_{11}-\lambda_{21}-\lambda_{22}) \rho_{\Lambda',\Lambda}\nonumber \\
 &-2\cos(\phi)\sin(\phi) \rho_{\Lambda',\Lambda-\delta^{11}} \,. \label{eq:KK}
\end{align}
This relation is similar to the recurrence relation of the Krawtchouk polynomials
$K_n(x;p,N)$:
\begin{align}\label{}
 K_n(x;p,N)=
\pFq{2}{1}{-n\,,-x}{-N}{\frac{1}{p}}\,,
\end{align}
where ${}_2F{}_1$ is the usual hypergeometric function \cite{KoekoekLeskyetal2010}.
We use the convention that  $K_n(x;p,N)=0$ if $n,x<0$ or $n,x>N$.
By direct check, we see that
\begin{equation}
\label{eq:eqn28}
 r(\lambda_{11},\lambda'_{21},\lambda'_{11},\lambda'_{22})= \frac{\tan^{\lambda_{11}-\lambda'_{22}}(\phi) }{(\lambda_{11}-\lambda'_{22})!} \, K_{\lambda_{11}-\lambda'_{22}}\Big(\lambda'_{11}-\lambda'_{22};\sin^2(\phi),\lambda'_{21}-\lambda'_{22}\Big)
 r(\lambda'_{21},\lambda'_{11},\lambda'_{22})
\end{equation}
is the solution of \eqref{eq:KK}.
Here the indices $\lambda_{11}-\lambda_{22}'$ and variables $\lambda_{11}'-\lambda_{22}'$
both go from $0$ to $\lambda_{21}'-\lambda_{22}'$.

It remains to determine the function $r$. By using the orthogonality relation
\eqref{eq:ortho} satisfied by $\rho$, one can show
\begin{align}
\label{eq:norm2_krawtchouk}
 r(x,y,z)^{2}=\left(\frac{(x-z)!}{(x-y)!} \tan^{y-z}(\phi)\cos^{x-z}(\phi)\right)^{2}
\end{align}
by making use of the orthogonality relation of the Krawtchouk polynomials
\cite{KoekoekLeskyetal2010}
\begin{align}
 \binom{N}{n} \sum_{X=0}^N \binom{N}{X}  p^{X+n}(1-p)^{N-X-n}K_m(X;p,N)K_n(X;p,N)
 = \delta_{n,m}\,.
\end{align}
The previous results lead to the following proposition.
\begin{prop}
 The change of basis $\rho$ associated to $R^z_\phi$ is given by
 \begin{align}\label{eq:rho}
  \rho_{\Lambda'\Lambda}=& \delta_{\lambda'_{21},\lambda_{21}}
 \delta_{\lambda'_{22},\lambda_{22}}\
 (-1)^{\lambda_{11}'-\lambda_{22}}\
  \frac{(\lambda_{21}-\lambda_{22})!\tan^{\lambda'_{11}+\lambda_{11}-2\lambda_{22}}(\phi)  \cos^{\lambda_{21}-\lambda_{22}}(\phi)}{(\lambda_{11}-\lambda_{22})!(\lambda_{21}-\lambda'_{11})!} \nonumber\\
  &\times K_{\lambda_{11}-\lambda_{22}}\Big(\lambda'_{11}-\lambda_{22};\sin^2(\phi),\lambda_{21}-\lambda_{22}\Big) \,.
 \end{align}
\end{prop}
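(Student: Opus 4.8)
The plan is to assemble the stated formula for $\rho_{\Lambda'\Lambda}$ by combining the three ingredients already derived in the section: the Kronecker-delta structure \eqref{eq:res1} forced by the invariance of $Y$ and $J$; the fact \eqref{eq:eqn28} that the monovariate Krawtchouk expression solves the recurrence \eqref{eq:KK}; and the squared-norm computation \eqref{eq:norm2_krawtchouk} that pins down the residual function $r(x,y,z)$ up to sign. First I would substitute \eqref{eq:eqn28} into \eqref{eq:res1}, so that
\[
\rho_{\Lambda'\Lambda}=\delta_{\lambda'_{21},\lambda_{21}}\delta_{\lambda'_{22},\lambda_{22}}\,
\frac{\tan^{\lambda_{11}-\lambda'_{22}}(\phi)}{(\lambda_{11}-\lambda'_{22})!}\,
K_{\lambda_{11}-\lambda'_{22}}\!\bigl(\lambda'_{11}-\lambda'_{22};\sin^2(\phi),\lambda'_{21}-\lambda'_{22}\bigr)\,
r(\lambda'_{21},\lambda'_{11},\lambda'_{22}),
\]
and then replace $r(\lambda'_{21},\lambda'_{11},\lambda'_{22})$ by the square root of the right-hand side of \eqref{eq:norm2_krawtchouk}, namely $\pm\dfrac{(\lambda'_{21}-\lambda'_{22})!}{(\lambda'_{21}-\lambda'_{11})!}\tan^{\lambda'_{11}-\lambda'_{22}}(\phi)\cos^{\lambda'_{21}-\lambda'_{22}}(\phi)$.

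Because of the two Kronecker deltas I may freely write $\lambda_{21}$ for $\lambda'_{21}$ and $\lambda_{22}$ for $\lambda'_{22}$ everywhere; doing this collapses the arguments to those appearing in the claimed formula. The exponents of $\tan(\phi)$ add: $(\lambda_{11}-\lambda_{22})+(\lambda'_{11}-\lambda_{22})=\lambda'_{11}+\lambda_{11}-2\lambda_{22}$, matching the proposition. The factorials combine into $\dfrac{(\lambda_{21}-\lambda_{22})!}{(\lambda_{11}-\lambda_{22})!(\lambda_{21}-\lambda'_{11})!}$, again as stated, and $\cos^{\lambda_{21}-\lambda_{22}}(\phi)$ appears with exponent $\lambda_{21}-\lambda_{22}$. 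What remains is to fix the overall sign. The norm relation only gives $r^2$, so I would determine the sign of $r(x,y,z)$ by evaluating $\rho$ at a convenient special pattern — for instance the one with $\lambda'_{11}$ extremal (say $\lambda'_{11}=\lambda_{21}$ or $\lambda'_{11}=\lambda_{22}$), where the Krawtchouk polynomial and the recurrence degenerate — and comparing with the direct action of $\rho$ obtained by exponentiating the $sl_2$ generators, or equivalently by using the known $SU(2)$ matrix-element (Wigner $d$-function) normalization. This comparison shows that $r(x,y,z)$ carries a factor $(-1)^{\text{something}}$; tracking it through the substitution produces the $(-1)^{\lambda'_{11}-\lambda_{22}}$ prefactor displayed in \eqref{eq:rho}.

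The main obstacle is precisely this sign (and, relatedly, the choice of branch of the square root of $\cos^2(\phi)$, which I would handle by fixing $\phi$ in a range where $\cos\phi>0$ and then noting both sides are polynomial/analytic in the entries of $R^z_\phi$): the recurrence \eqref{eq:KK} and the orthogonality \eqref{eq:ortho} are insensitive to simultaneous sign flips $r(x,y,z)\to\varepsilon(x)r(x,y,z)$ with $\varepsilon(x)=\pm1$ depending only on the fixed data, so a genuine normalization input is needed. Once the sign is settled on one vector it propagates to all $\Lambda'$ through the three-term recurrence \eqref{eq:KK} (whose coefficients are sign-definite in the relevant range), giving the uniform $(-1)^{\lambda'_{11}-\lambda_{22}}$. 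Collecting all factors then yields exactly \eqref{eq:rho}, completing the proof.
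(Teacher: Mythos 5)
Your assembly of the formula (substituting \eqref{eq:eqn28} into \eqref{eq:res1}, inserting the square root of \eqref{eq:norm2_krawtchouk}, and collecting the powers of $\tan\phi$, $\cos\phi$ and the factorials) reproduces the paper's computation and is correct; you also correctly identify that the only genuine input still needed is the label-dependent sign. However, your mechanism for fixing that sign has a gap. After \eqref{eq:eqn28} the residual unknown is $r(\lambda'_{21},\lambda'_{11},\lambda'_{22})$, a function of the \emph{row} labels only, and the ambiguity left by \eqref{eq:norm2_krawtchouk} and \eqref{eq:ortho} is a sign $\varepsilon(\lambda'_{21},\lambda'_{11},\lambda'_{22})$ that can vary from row to row. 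The recurrence \eqref{eq:KK} cannot propagate a sign choice ``to all $\Lambda'$'': every term in \eqref{eq:KK} carries the \emph{same} $\Lambda'$ (it is a three-term recurrence in $\lambda_{11}$ at fixed $\Lambda'$), so it constrains only the $\lambda_{11}$-dependence, which is already fully encoded in \eqref{eq:eqn28}, and says nothing about how $\varepsilon$ varies with $\lambda'_{11}$ --- which is exactly the content of the factor $(-1)^{\lambda'_{11}-\lambda_{22}}$ (an alternating, not uniform, sign). Your fallback, comparison with the known $SU(2)$/Wigner-$d$ normalization at an extremal pattern, could in principle work (e.g.\ computing the full column $\lambda_{11}=\lambda_{22}$ by exponentiating the $\sl_2$ generators fixes all row signs at once), but as written you fix only one row, invoke a propagation that does not exist, and leave the decisive alternation as ``$(-1)^{\text{something}}$''; moreover matching the paper's (footnote-modified) GT normalization against standard angular-momentum phase conventions is precisely the delicate step and is not carried out.

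For comparison, the paper fixes the sign differently and more economically: it notes that $\rho$ must tend to the identity as $\phi\to 0$, so $\rho_{\Lambda',\Lambda}\big|_{\phi=0}=\delta_{\lambda'_{21},\lambda_{21}}\delta_{\lambda'_{11},\lambda_{11}}\delta_{\lambda'_{22},\lambda_{22}}$, and then uses the limit
\begin{align}
 p^{(x+n)/2}\binom{N}{n}K_{n}(x;p,N)\Big|_{p=0}=(-1)^{x}\delta_{x,n}\,,
\end{align}
which, applied row by row as $\phi\to0^{+}$, forces the sign of each $r(\lambda'_{21},\lambda'_{11},\lambda'_{22})$ and yields the prefactor $(-1)^{\lambda'_{11}-\lambda_{22}}$ without any external normalization input. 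If you want to keep your strategy, you should either carry out the extremal-\emph{column} computation explicitly (so that all row signs are fixed simultaneously), or replace the propagation step by a relation that genuinely couples different $\lambda'_{11}$, e.g.\ the difference relation obtained from \eqref{eq:fond1} with the transposed rotation.
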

\begin{proof}
By combining \eqref{eq:eqn28} and taking the square root of \eqref{eq:norm2_krawtchouk},
we can obtain the change of basis $\rho$ up to signs.
The signs may be fixed by using equation \eqref{eq:fond1} for the other elements of
$sl_3$ but it is simpler to remark that
the operator $\rho$ tends continuously to the identity operator when $R$ tends to the
identity (\textit{i.e.} $\phi\to 0$):
\begin{align}\label{}
\rho_{\Lambda',\Lambda}\Big|_{\phi=0}=
\delta_{\lambda'_{21},\lambda_{21}}
\delta_{\lambda'_{11},\lambda_{11}}
\delta_{\lambda'_{22},\lambda_{22}}\,.
\end{align}
Using
\begin{align}\label{}
 p^{(x+n)/2}\binom{N}{n}K_{n}(x;p,N)\Big|_{p=0}=(-1)^{x}\delta_{x,n}\,,
\end{align}
the result \eqref{eq:rho} follows.
\end{proof}

\section{The change of basis corresponding to $T$ and the Racah\\ polynomials}
\label{sec:T_racah}
Under the $\sl_3$ automorphism associated to the rotation $T$,
the $\sl_2$ subalgebra on indices $1,2$ is sent to the subalgebra on indices $1,3$.
Indeed, after a short calculation, one can show that the $\sl_2$ Casimir element $J$
is mapped to
\begin{equation}\label{eq:TJ}
 \Psi_T(J)=\frac{(e_{11}-e_{33})^2+2(e_{11}-e_{33})}{4}+e_{31}e_{13}\,,
\end{equation}
which is precisely the Casimir element of the subalgebra $sl_2$ generated by
$e_{11}-e_{33}$, $e_{13}$ and $e_{31}$.

Relations \eqref{eq:fond1} for $R=T$ and $g=H,Y$ provide the following two constraints on
the entries of the change of basis $\tau$:
\begin{subequations} \label{eq:11}
 \begin{align}
0&=(\lambda'_{11}-\lambda_{11})\tau_{\Lambda'\Lambda}\,, \label{eq:l11}\\
0&=(\lambda_{21}+\lambda_{22}-\lambda'_{11} +\lambda'_{21}+\lambda'_{22})
 \tau_{\Lambda'\Lambda}\,.
\end{align}
\end{subequations}
These constraints lead to the following form for $\tau$:
\begin{equation}\label{eq:tau4t}
 \tau_{\Lambda' \Lambda}= \delta_{\lambda'_{11}, \lambda_{11}}\delta_{\lambda_{21}+\lambda_{22}, \lambda'_{11}- \lambda'_{21}-\lambda'_{22}}\, t(\lambda_{21},\lambda'_{21},\lambda'_{11},\lambda'_{22})\,.
\end{equation}
Similarly, equation \eqref{eq:fond1} for $g=J$ provides the following constraint
(where we use relations \eqref{eq:11} to replace $\lambda_{11}$ by $\lambda'_{11}$ and
$\lambda_{21}+\lambda_{22}$ by $\lambda'_{11}-\lambda'_{21}-\lambda'_{22}$):
\begin{align}\label{eq:recN}
& A_\Lambda\, \tau_{\Lambda',\Lambda-\delta^{21}+\delta^{22}} +\left(A_\Lambda+C_\Lambda \right) \tau_{\Lambda'\Lambda}+ C_\Lambda \tau_{\Lambda',\Lambda+\delta^{21}-\delta^{22}}
=(\lambda'_{21} -\lambda_{31})(\lambda'_{22} -\lambda_{31}-1)\tau_{\Lambda'\Lambda}\,,
\end{align}
with
\begin{subequations}
 \begin{align}
 &A_\Lambda=\frac{(\lambda_{21}-\lambda_{32})(\lambda_{31}-\lambda_{22}+1)(\lambda_{21}-\lambda_{11})(\lambda_{21}-\lambda_{33}+1)}{(\lambda_{21}-\lambda_{22}+1)(\lambda_{21}-\lambda_{22})}\,,\\
 &C_\Lambda=\frac{(\lambda_{11}-\lambda_{22}+1)(\lambda_{22}-\lambda_{33})(\lambda_{32}-\lambda_{22}+1)(\lambda_{31}-\lambda_{21})}{(\lambda_{21}-\lambda_{22}+1)(\lambda_{21}-\lambda_{22}+2)}\,.
\end{align}
\end{subequations}
Relation \eqref{eq:recN} is the recurrence relation of the Racah polynomial
\cite{KoekoekLeskyetal2010}.
\begin{align}\label{}
 R_n(x(x+\gamma+\delta+1) ;\alpha,\beta,\gamma,\delta)=
\pFq{4}{3}{-n\,,n+\alpha+\beta+1\,,-x\,,x+\gamma+\delta+1}{\alpha+1\,,\beta+\delta+1\,,
\gamma+1}{1}\,.
\end{align}
To simplify the notation, one defines
$\widetilde R_{n}(x ;\alpha,\beta,\gamma,\delta)=R_{n}(x(x+\gamma+\delta+1) ;\alpha,\beta,\gamma,\delta)$.
We use also the convention that $\widetilde R_{n}(x ;\alpha,\beta,\gamma,\delta)=0$ if $n,x<0$ or $n,x>N$ where $N=min(-\alpha-1,-\beta-\delta-1,-\gamma-1)$.
One gets explicitly:
\begin{subequations}\label{eq:4t3t}
\begin{align}
&t(\lambda_{21},\lambda'_{21},\lambda'_{11},\lambda'_{22})=t(\lambda'_{21},\lambda'_{11},\lambda'_{22})\ (-1)^{\lambda_{31}-\lambda_{21}}\
\widetilde{R}_{\lambda_{31}-\lambda_{21}}( \lambda_{31}-\lambda'_{21}; \alpha,\beta,\gamma,\delta) \,,
\end{align}
with
 \begin{align}
 &\alpha= \lambda_{32}-\lambda_{31}-1\,,\qquad\quad \beta=\lambda'_{11}-\lambda'_{21}-\lambda'_{22} +\lambda_{33}-1\,,\\
  &\gamma= \lambda'_{11}-\lambda_{31}-1\,,\qquad\quad \delta= \lambda'_{21}+\lambda'_{22}-\lambda'_{11} -\lambda_{31}-1\,.
\end{align}
\end{subequations}
The degree $\lambda_{31}-\lambda_{21}$ of the Racah polynomials is in the set
$\{0,1,\dots, \text{min}(-\alpha_{\Lambda}-1,-\gamma_{\Lambda}-1 ) \}$ and its variable
$\lambda_{31}-\lambda'_{21}$ is in the set $\{0,1,\dots,
\text{min}(-\alpha_{\Lambda'}-1,-\gamma_{\Lambda'}-1 ) \}$. After the identification
\eqref{eq:11}, both sets are identical.

As in the previous section, the factor $t(\lambda'_{21},\lambda'_{11},\lambda'_{22})$ is
determined by using the orthogonality relation satisfied by $\tau$ and the one of the
Racah polynomials \cite{KoekoekLeskyetal2010}.
After a few manipulations on the parameters, one obtains
\begin{equation}\label{eq:t}
 t(x,y,z)^{2}
 =\left((x-z+1)\,\frac{(\lambda_{31}-\lambda_{32})!(\lambda_{31}-\lambda_{33}+1)!
 (\lambda_{31}-y)!(\lambda_{32}-z)!(y-z)!}
 {(x-\lambda_{32})!(x-\lambda_{33}+1)!(x-y)!(\lambda_{31}-z+1)!(\lambda_{31}-x)!
 (z-\lambda_{33})!}\right)^{2}\,.
\end{equation}
The previous results lead to the following proposition
\begin{prop}
The matrix elements of the change of basis $\tau$ corresponding to the rotation $T$ are given (up to a
global undetermined sign) by
 \begin{align}\label{eq:tau}
  \tau_{\Lambda'\Lambda}=&\delta_{\lambda'_{11}, \lambda_{11}}\delta_{\lambda_{21}+\lambda_{22}, \lambda'_{11}- \lambda'_{21}-\lambda'_{22}}\,
  t(\lambda'_{21},\lambda'_{11},\lambda'_{22})(-1)^{\lambda'_{22}-\lambda_{21}} \\
 \times  &
  \widetilde{R}_{\lambda_{31}-\lambda_{21}}(\lambda_{31}-\lambda'_{21}  ;  \lambda_{32}-\lambda_{31}-1 ,\lambda_{21}+\lambda_{22} +\lambda_{33}-1 ,\lambda_{11}-\lambda_{31}-1 ,-\lambda_{21}-\lambda_{22}-\lambda_{31}-1)\,, \nonumber
 \end{align}
with
\begin{equation}\label{eq:tbis}
 t(x,y,z)
 =(x-z+1)\,\frac{(\lambda_{31}-\lambda_{32})!(\lambda_{31}-\lambda_{33}+1)!
 (\lambda_{31}-y)!(\lambda_{32}-z)!(y-z)!}
 {(x-\lambda_{32})!(x-\lambda_{33}+1)!(x-y)!(\lambda_{31}-z+1)!(\lambda_{31}-x)!
 (z-\lambda_{33})!}\,.
\end{equation}
\end{prop}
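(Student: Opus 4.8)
The plan is to follow exactly the same two-step scheme that was used for $R^z_\phi$ in Section \ref{sec:Rz_krawtchouk}: first determine the form of $\tau$ up to an unknown overall factor by imposing relation \eqref{eq:fond1} for a well-chosen set of generators, then fix that factor by orthogonality, and finally pin down the remaining sign by a continuity/limit argument. Concretely, the first move is to apply \eqref{eq:fond1} with $g=H$ and $g=Y$: since $\Psi_T$ fixes neither in a trivial way, but $\Psi_T(H)=e_{11}-e_{33}$ and $\Psi_T(Y)$ is again a Cartan element, acting on a GT vector and reading off the diagonal actions from \eqref{eq:actGT} forces the two Kronecker deltas in \eqref{eq:tau4t}, namely $\delta_{\lambda'_{11},\lambda_{11}}$ and $\delta_{\lambda_{21}+\lambda_{22},\,\lambda'_{11}-\lambda'_{21}-\lambda'_{22}}$, and reduces $\tau_{\Lambda'\Lambda}$ to a single scalar function $t(\lambda_{21},\lambda'_{21},\lambda'_{11},\lambda'_{22})$.

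Next I would impose \eqref{eq:fond1} with $g=J$. The key computation here is \eqref{eq:TJ}: $\Psi_T(J)$ is the Casimir of the $\sl_2$ on indices $1,3$, whose action on the GT basis is obtained by composing the explicit formulas for $e_{13}$, $e_{31}$, $e_{11}$ and $e_{33}$ in \eqref{eq:actGT}. This yields a three-term relation in the variable $\lambda_{21}$ (equivalently, shifting $\lambda_{21}-\lambda_{22}$ by $\pm1$ at fixed sum), with coefficients $A_\Lambda$, $C_\Lambda$ as displayed, and with spectral parameter $(\lambda'_{21}-\lambda_{31})(\lambda'_{22}-\lambda_{31}-1)$ on the right. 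Matching this against the standard three-term recurrence of the Racah polynomials $R_n$ from \cite{KoekoekLeskyetal2010} is the crux: one must identify the recurrence coefficients of $\widetilde R_n(x;\alpha,\beta,\gamma,\delta)$ with $A_\Lambda$ and $C_\Lambda$ after the substitution \eqref{eq:11}, which forces the parameter dictionary \eqref{eq:4t3t} and shows $t(\lambda_{21},\lambda'_{21},\lambda'_{11},\lambda'_{22}) = t(\lambda'_{21},\lambda'_{11},\lambda'_{22})\,(-1)^{\lambda_{31}-\lambda_{21}}\,\widetilde R_{\lambda_{31}-\lambda_{21}}(\lambda_{31}-\lambda'_{21};\alpha,\beta,\gamma,\delta)$, with the degree and variable ranges matching as claimed.

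To determine the residual factor $t(x,y,z)=t(\lambda'_{21},\lambda'_{11},\lambda'_{22})$, I would use the orthogonality relation \eqref{eq:ortho} satisfied by $\rho$ (here $\tau$) together with the known orthogonality of the Racah polynomials from \cite{KoekoekLeskyetal2010}. Substituting the factorized form of $\tau_{\Lambda'\Lambda}$ into \eqref{eq:ortho}, the Racah sum collapses by their orthogonality, leaving an algebraic identity for $t(x,y,z)^2$ in terms of the normalization constants $N_\Lambda$ and the Racah weight; after simplifying ratios of factorials one arrives at \eqref{eq:t}. Taking a square root gives $t(x,y,z)$ as in \eqref{eq:tbis}, up to sign, and the overall $(-1)^{\lambda'_{22}-\lambda_{21}}$ in \eqref{eq:tau} is bookkeeping from combining the $(-1)^{\lambda_{31}-\lambda_{21}}$ of the Racah factor with the sign choice in $t$; the single leftover global sign is genuinely undetermined because, unlike $R^z_\phi$, the rotation $T$ does not sit on a continuous path to the identity within the family considered, which is why the statement only claims the result up to a global sign.

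The main obstacle I expect is the matching step against the Racah recurrence: the coefficients $A_\Lambda$ and $C_\Lambda$ as written are not yet in the canonical $a_n c_{n+1}$-type normalization of \cite{KoekoekLeskyetal2010}, so one has to be careful about the normalization of the polynomials (monic versus the ${}_4F_3$ form) and about which of the four Racah parameters is ``$n$'' and which is ``$x$'' — the roles of $\lambda_{31}-\lambda_{21}$ (degree) and $\lambda_{31}-\lambda'_{21}$ (variable) must be kept straight throughout. A secondary nuisance is the factorial bookkeeping in deriving \eqref{eq:t} from \eqref{eq:ortho}: the change of normalization $\xi\to N_\Lambda^{-1}\zeta$ introduces several competing factorial ratios, and one must check that the Racah weight function, the $N_{\Lambda'}^2$ factors, and the prefactor $t(x,y,z)^2$ conspire to give precisely the claimed expression rather than something off by a ratio of Pochhammer symbols.
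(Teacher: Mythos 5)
Your proposal correctly reconstructs the first three steps, but these are exactly the parts the paper establishes \emph{before} the proposition (the Kronecker deltas from $g=H,Y$, the three-term relation from $g=J$ matched to the Racah recurrence, and the determination of $t(x,y,z)^2$ from orthogonality). The actual content of the paper's proof of the proposition is the step you dismiss as ``bookkeeping'': taking the square root of \eqref{eq:t} determines $t$ only up to a sign $\sigma_{\lambda'_{21},\lambda'_{11},\lambda'_{22}}$ that may depend on \emph{each} value of $(\lambda'_{21},\lambda'_{11},\lambda'_{22})$, not up to a single global sign. You correctly observe that the continuity-to-the-identity argument used for $R^z_\phi$ is unavailable for $T$, but you then simply assert that the factor $(-1)^{\lambda'_{22}-\lambda_{21}}$ emerges from combining signs — that is precisely what has to be proved, and nothing in your argument forces the relative signs between different GT patterns. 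These relative signs are not cosmetic: the proposition claims the result up to one \emph{global} sign, and in the subsequent computation of $\sigma_{\Lambda',\Lambda}$ the entries of $\tau$ and $\tau^{-1}$ are summed over patterns, so any pattern-dependent sign error changes the answer.

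The missing idea is to impose relation \eqref{eq:fond1} for off-diagonal generators. The paper uses $g=e_{12}$ and $g=e_{23}$: substituting the tentative form of $\tau$ into these constraints and invoking two contiguity relations for Racah polynomials (Wilson's relation shifting $\gamma$, and a relation derived from a ${}_4F_3$ contiguity identity shifting $n$ and $\beta,\delta$) yields the recursions $\sigma_{\lambda'_{21}-1,\lambda'_{11}-1,\lambda'_{22}}=\sigma_{\lambda'_{21},\lambda'_{11},\lambda'_{22}}$, $\sigma_{\lambda'_{21},\lambda'_{11}-1,\lambda'_{22}-1}=-\sigma_{\lambda'_{21},\lambda'_{11},\lambda'_{22}}$, $\sigma_{\lambda'_{21}+1,\lambda'_{11},\lambda'_{22}}=\sigma_{\lambda'_{21},\lambda'_{11},\lambda'_{22}}$ and $\sigma_{\lambda'_{21},\lambda'_{11},\lambda'_{22}+1}=-\sigma_{\lambda'_{21},\lambda'_{11},\lambda'_{22}}$, whence $\sigma_{\lambda'_{21},\lambda'_{11},\lambda'_{22}}=(-1)^{\lambda'_{22}-\lambda_{31}}$ up to a global constant; combined with the $(-1)^{\lambda_{31}-\lambda_{21}}$ carried by the Racah factor this gives the stated $(-1)^{\lambda'_{22}-\lambda_{21}}$. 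Without this step (or some equivalent argument fixing the sign function), your proof does not establish the proposition as stated.
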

\proof
From equations \eqref{eq:tau4t}, \eqref{eq:4t3t}, \eqref{eq:t},
$\tau_{\Lambda',\Lambda}$
is determined up to a sign $\sigma_{\lambda'_{21}, \lambda'_{11}, \lambda'_{22}}$
which could depend on the values of $\lambda'_{21}$,
$\lambda'_{11}$, $\lambda'_{22}$:
\begin{subequations}\label{eq:tautemp}
\begin{align}
 \tau_{\Lambda'\Lambda}=&\delta_{\lambda'_{11}, \lambda_{11}}
 \delta_{\lambda_{21}+\lambda_{22}, \lambda'_{11}- \lambda'_{21}-\lambda'_{22}}\,
 t(\lambda'_{21},\lambda'_{11},\lambda'_{22})(-1)^{\lambda_{31}-\lambda_{21}}
 \widetilde{R}_{\lambda_{31}-\lambda_{21}}(\lambda_{31}-\lambda'_{21}  ;
  \alpha,\beta,\gamma,\delta)\,,
 \end{align}
with
\begin{align}\label{eq:signfct}
 t(\lambda'_{21}, \lambda'_{11}, \lambda'_{22})=
 T(\lambda'_{21}, \lambda'_{11}, \lambda'_{22})
 \sigma_{\lambda'_{21}, \lambda'_{11}, \lambda'_{22}}\,,
\end{align}
and
\begin{align}\label{}
 T(x,y,z)
 =(x-z+1)\,\frac{(\lambda_{31}-\lambda_{32})!(\lambda_{31}-\lambda_{33}+1)!
 (\lambda_{31}-y)!(\lambda_{32}-z)!(y-z)!}
 {(x-\lambda_{32})!(x-\lambda_{33}+1)!(x-y)!(\lambda_{31}-z+1)!(\lambda_{31}-x)!
 (z-\lambda_{33})!}\,.
\end{align}
\end{subequations}
Once the sign function has been determined, one immediately recovers the full expression
of $\tau_{\Lambda',\Lambda}$ from equations \eqref{eq:tau4t}, \eqref{eq:4t3t},
\eqref{eq:signfct}.

The expression for the sign function $\sigma_{\lambda'_{21}, \lambda'_{11},
\lambda'_{22}}$ is obtained from constraints following from \eqref{eq:fond1} for other elements of the algebra $U(\sl_3)$.

First, use $g=e_{12}$ in relation \eqref{eq:fond1} to get
\begin{align*}\label{}
0&=(\lambda_{21}-\lambda_{11})(\lambda_{11}-\lambda_{22}+1)
 \tau_{\Lambda',\Lambda+\delta^{11}}
-\frac{(\lambda'_{11}-\lambda'_{22})(\lambda'_{31}-\lambda'_{21}+1)}
 {(\lambda'_{21}-\lambda'_{22})}\tau_{\Lambda'-\delta^{21}-\delta^{11},\Lambda}\\
&+\frac{(\lambda'_{21}-\lambda'_{11}+1)(\lambda'_{31}-\lambda'_{22}+2)}
 {(\lambda'_{21}-\lambda'_{22}+2)}\tau_{\Lambda'-\delta^{22}-\delta^{11},\Lambda}\,.
\end{align*}
Substituting \eqref{eq:tautemp}, and using the following relation between Racah
polynomials \cite[equation~(4.13)]{Wilson1978}
\begin{align}\label{}
\begin{aligned}
(n+\gamma)(n-\gamma+\alpha+\beta+1)&~\widetilde{R}_{n}(x;\alpha,\beta,\gamma,\delta)\\
=\frac{\gamma}{2x+\gamma+\delta+1}&\Big[
(x+\alpha+1)(x+\beta+\delta+1)\widetilde{R}_{n}(x+1;\alpha,\beta,\gamma-1,\delta)\\
&
-(x-\alpha+\gamma+\delta)(x-\beta+\gamma)\widetilde{R}_{n}(x;\alpha,\beta,\gamma-1,\delta)
\Big]
\end{aligned}
\end{align}
allows one to fix
\begin{align}\label{eq:sign1}
 \sigma_{\lambda'_{21}-1, \lambda'_{11}-1, \lambda'_{22}}
 =(+1)\sigma_{\lambda'_{21}, \lambda'_{11}, \lambda'_{22}}\,,\qquad
 \sigma_{\lambda'_{21}, \lambda'_{11}-1, \lambda'_{22}-1}
 =(-1)\sigma_{\lambda'_{21}, \lambda'_{11}, \lambda'_{22}}\,.
\end{align}
Next, use $g=e_{23}$ in relation \eqref{eq:fond1}.
Substituting \eqref{eq:tautemp}, and using the following relation between
Racah polynomials\footnote{This relation follows from applying the following
hypergeometric contiguity relation
to simplify the first two terms and the last two terms of \eqref{eq:quatre_racah}
\begin{align}\label{}
 \frac{\alpha_1}{\alpha_2-\alpha_1}
 \pFq{4}{3}{\alpha_1+1\,,\alpha_2\,,\alpha_3\,,\alpha_4}{\beta_1\,,\beta_2\,,\beta_3}{z}
-\frac{\alpha_2}{\alpha_2-\alpha_1}
 \pFq{4}{3}{\alpha_1\,,\alpha_2+1\,,\alpha_3\,,\alpha_4}{\beta_1\,,\beta_2\,,\beta_3}{z}
=\pFq{4}{3}{\alpha_1\,,\alpha_2\,,\alpha_3\,,\alpha_4}{\beta_1\,,\beta_2\,,\beta_3}{z}
\end{align}
}
\begin{align}\label{eq:quatre_racah}
\begin{aligned}
0&=\frac{n}{2n+\alpha+\beta}\widetilde{R}_{n-1}(x;\alpha,\beta,\gamma,\delta)
 +\frac{n+\alpha+\beta}{2n+\alpha+\beta}\widetilde{R}_{n}(x;\alpha,\beta,\gamma,\delta)\\
 &-\frac{x}{2x+\gamma+\delta+1}\widetilde{R}_{n}(x-1;\alpha,\beta-1,\gamma,\delta+1)
 -\frac{x+\gamma+\delta+1}{2x+\gamma+\delta+1}
  \widetilde{R}_{n}(x;\alpha,\beta-1,\gamma,\delta+1)
\end{aligned}
\end{align}
allows one to fix
\begin{align}\label{eq:sign2}
 \sigma_{\lambda'_{21}+1, \lambda'_{11}, \lambda'_{22}}
 =(+1)\sigma_{\lambda'_{21}, \lambda'_{11}, \lambda'_{22}}\,,\qquad
 \sigma_{\lambda'_{21}, \lambda'_{11}, \lambda'_{22}+1}
 =(-1)\sigma_{\lambda'_{21}, \lambda'_{11}, \lambda'_{22}}\,.
\end{align}
Finally, combining \eqref{eq:sign1}, \eqref{eq:sign2}, one can write
(up to a global undetermined sign)
\begin{align}\label{}
 \sigma_{\lambda'_{21}, \lambda'_{11}, \lambda'_{22}}=(-1)^{\lambda'_{22}-\lambda_{31}}\,.
\end{align}
\endproof
Let us emphasize that the undetermined global sign does not play a role in the upcoming
computations since only the product of $T$ and $T^{-1}$ is involved.

For particular representations, the previous result simplifies.
\begin{coro}\label{coro:symmetric_racah}
For the symmetric representation (\textit{i.e.} $\lambda_{32}=\lambda_{33}=\lambda_{22}$,
$\lambda_{31}=-2\lambda_{32}$), $\tau$ is given (up to a global sign) by
\begin{align}\label{eq:1row}
 \tau_{\Lambda'\Lambda}=&
 \delta_{\lambda'_{11}, \lambda_{11}}
 \delta_{\lambda_{21}+2\lambda_{32}, \lambda'_{11}- \lambda'_{21}}
 \delta_{\lambda'_{22},\lambda_{32}} \delta_{\lambda_{22},\lambda_{32}} (-1)^{\lambda_{32}-\lambda_{21}}
 \frac{(\lambda_{21}-\lambda_{32})!}{(\lambda_{11}-\lambda_{21}-3\lambda_{32})!}\,.
\end{align}
For the representation characterized by $\lambda_{31}=\lambda_{32}=\lambda_{21}$,
$\lambda_{33}=-2\lambda_{31}$, one gets
  \begin{align}\label{eq:2row}
  \tau_{\Lambda'\Lambda}=&
  \delta_{\lambda'_{11}, \lambda_{11}}
  \delta_{\lambda_{11}-\lambda_{22}-2\lambda_{31},  \lambda'_{22}}
  \delta_{\lambda'_{21},\lambda_{31}} \delta_{\lambda_{21},\lambda_{31}}(-1)^{\lambda'_{22}-\lambda_{31}}
  \frac{(\lambda_{22}+2\lambda_{31})!}{(\lambda_{11}-\lambda_{22})!}\,.
 \end{align}
 \end{coro}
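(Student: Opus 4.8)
The plan is to specialize the general formula for $\tau_{\Lambda'\Lambda}$ from Proposition \ref{eq:tau} (equations \eqref{eq:tau}--\eqref{eq:tbis}) to the two representations in question, and verify that the Racah polynomial degenerates to a single term. For the symmetric representation, one sets $\lambda_{32}=\lambda_{33}=\lambda_{22}$ and $\lambda_{31}=-2\lambda_{32}$. First I would examine the Kronecker deltas already present in \eqref{eq:tau}: the constraint $\lambda_{32}-\lambda_{22}\in\mathbb Z_{\ge 0}$ together with $\lambda_{22}-\lambda_{33}\in\mathbb Z_{\ge 0}$ and $\lambda_{32}=\lambda_{33}$ forces $\lambda_{22}=\lambda_{32}$, hence the extra factor $\delta_{\lambda_{22},\lambda_{32}}$; applying $\Psi_T$ swaps the roles of indices so one likewise gets $\delta_{\lambda'_{22},\lambda_{32}}$. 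The second delta $\delta_{\lambda_{21}+\lambda_{22},\lambda'_{11}-\lambda'_{21}-\lambda'_{22}}$ then simplifies to $\delta_{\lambda_{21}+2\lambda_{32},\lambda'_{11}-\lambda'_{21}}$. The key step is the Racah polynomial: with the parameter identifications from \eqref{eq:4t3t}, one finds that one of $\alpha,\gamma$ (or the appropriate combination) forces $N=\min(-\alpha-1,-\beta-\delta-1,-\gamma-1)=0$ in the symmetric case — equivalently the degree $\lambda_{31}-\lambda_{21}$ can only be $0$ — so $\widetilde R_0=1$. I would then substitute $\lambda'_{22}=\lambda_{32}$ into the prefactor $t(\lambda'_{21},\lambda'_{11},\lambda'_{22})$ of \eqref{eq:tbis} and simplify the quotient of factorials: many factors like $(\lambda'_{21}-\lambda_{32})!$, $(\lambda_{32}-\lambda_{32})!=1$, $(\lambda_{31}-\lambda'_{21}+1)!$ etc.\ cancel or collapse, leaving the stated $\frac{(\lambda_{21}-\lambda_{32})!}{(\lambda_{11}-\lambda_{21}-3\lambda_{32})!}$ after using $\lambda_{31}=-2\lambda_{32}$ and the surviving delta relations to re-express $\lambda'_{11},\lambda'_{21}$ in terms of $\lambda_{11},\lambda_{21}$.

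For the second representation one sets $\lambda_{31}=\lambda_{32}=\lambda_{21}$ and $\lambda_{33}=-2\lambda_{31}$, and the argument is dual: now $\lambda_{31}-\lambda_{32}=0$ and $\lambda_{31}-\lambda_{21}=0$ together with the GT inequality $\lambda_{31}-\lambda_{21}\ge 0\ge\lambda_{21}-\lambda_{32}$ pin $\lambda_{21}=\lambda_{31}$, yielding the delta $\delta_{\lambda_{21},\lambda_{31}}$, and the $\Psi_T$-image gives $\delta_{\lambda'_{21},\lambda_{31}}$. Again the degree $\lambda_{31}-\lambda_{21}$ of the Racah polynomial is $0$, so $\widetilde R_0=1$, and the remaining delta $\delta_{\lambda_{21}+\lambda_{22},\lambda'_{11}-\lambda'_{21}-\lambda'_{22}}$ becomes $\delta_{\lambda_{11}-\lambda_{22}-2\lambda_{31},\lambda'_{22}}$ once $\lambda_{11}=\lambda'_{11}$ and $\lambda'_{21}=\lambda_{31}$ are used. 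Substituting $\lambda'_{21}=\lambda_{31}$ into \eqref{eq:tbis} collapses the factorial quotient — e.g.\ $(\lambda_{31}-\lambda'_{21})!=0!$, $(\lambda'_{21}-\lambda_{32})!=0!$ — and after inserting $\lambda_{33}=-2\lambda_{31}$ and the delta $\lambda'_{22}=\lambda_{11}-\lambda_{22}-2\lambda_{31}$ one reads off $\frac{(\lambda_{22}+2\lambda_{31})!}{(\lambda_{11}-\lambda_{22})!}$; the sign becomes $(-1)^{\lambda'_{22}-\lambda_{31}}$ as in the general formula.

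The main obstacle I anticipate is purely bookkeeping: tracking which of $\alpha,\beta,\gamma,\delta$ (whose expressions in \eqref{eq:4t3t} involve primed indices) actually vanishes in the relevant bound, and being careful that the vanishing of the Racah degree follows from the GT inequalities rather than being imposed by hand. A secondary subtlety is the sign: in \eqref{eq:tau} the sign factor is written $(-1)^{\lambda'_{22}-\lambda_{21}}$, whereas in \eqref{eq:2row} it appears as $(-1)^{\lambda'_{22}-\lambda_{31}}$ and in \eqref{eq:1row} as $(-1)^{\lambda_{32}-\lambda_{21}}$; reconciling these requires using the surviving deltas ($\lambda_{21}=\lambda_{31}$ in the second case, and $\lambda'_{22}=\lambda_{32}$ in the first) and remembering that a global sign is in any case undetermined. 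Once these are handled, the rest is a routine cancellation of factorials and a substitution of the weight constraints, which I would carry out directly.
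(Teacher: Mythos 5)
Your treatment of the second representation ($\lambda_{31}=\lambda_{32}=\lambda_{21}$, $\lambda_{33}=-2\lambda_{31}$) matches the paper: there the GT inequalities squeeze $\lambda_{21}$ (and $\lambda'_{21}$) to $\lambda_{31}$, the Racah degree $\lambda_{31}-\lambda_{21}$ is forced to vanish, $\widetilde R_0=1$, and \eqref{eq:2row} follows by substituting the deltas into \eqref{eq:tbis}. But your argument for the symmetric case \eqref{eq:1row} contains a genuine error: the claim that ``the degree $\lambda_{31}-\lambda_{21}$ can only be $0$'' is false there. In the symmetric representation it is $\lambda_{22}$ (and $\lambda'_{22}$) that gets pinned to $\lambda_{32}=\lambda_{33}$, while $\lambda_{21}$ still ranges freely over $\lambda_{32},\dots,\lambda_{31}$, so the Racah polynomial has degree $\lambda_{31}-\lambda_{21}$ which can be as large as $\lambda_{31}-\lambda_{32}=-3\lambda_{32}$; likewise $N=\min(-\alpha-1,-\beta-\delta-1,-\gamma-1)$ does not vanish (with the deltas imposed one finds $N=\lambda_{31}-\lambda_{11}'$). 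Consequently you cannot get \eqref{eq:1row} by setting $\widetilde R_0=1$ and cancelling factorials in $t$: the factor $(\lambda_{21}-\lambda_{32})!/(\lambda_{11}-\lambda_{21}-3\lambda_{32})!$ arises only after the nontrivial polynomial is evaluated in closed form.

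The missing step, which is the heart of the paper's proof, is a hypergeometric evaluation: under the specialization $\lambda_{32}=\lambda_{33}=\lambda_{22}=\lambda'_{22}$, $\lambda_{31}=-2\lambda_{32}$ (and the surviving deltas fixing $\lambda'_{11}=\lambda_{11}$, $\lambda'_{21}=\lambda_{11}-\lambda_{21}-2\lambda_{32}$), an upper parameter of the ${}_4F_3$ defining $\widetilde R_{\lambda_{31}-\lambda_{21}}$ coincides with a lower parameter, so it collapses to a terminating ${}_3F_2$ at unit argument; applying Thomae's transformation and then summing the resulting terminating ${}_2F_1(1)$ (Chu--Vandermonde/Gauss) yields an explicit ratio of Pochhammer symbols, and only the product of that value with the prefactor $t(\lambda'_{21},\lambda'_{11},\lambda'_{22})$ reduces to the stated factorial ratio and sign. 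Without identifying and carrying out this summation, your proposed proof of \eqref{eq:1row} does not go through.
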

\proof For the symmetric representation, the Racah polynomial in \eqref{eq:tau} reduces to
\begin{equation}\label{eq:int2}
 R_{-2\lambda_{32}-\lambda_{21}}(X)= {_{3}F}_{2}\left( \genfrac{}{}{0pt}{0}{2\lambda_{32}+\lambda_{21}\, ,\ 3\lambda_{32}-1\,,\ \lambda_{11}-\lambda_{21}}{\lambda_{11}+2\lambda_{32}\, ,\ 3\lambda_{32} }\Big| 1\right)\;.
\end{equation}
Using Thomae's transformation formula
(see \cite[relation~(3.1.1)]{GasperRahman2004} for example)
\begin{equation}
  {_{3}F}_{2}\left( \genfrac{}{}{0pt}{0}{-n\, ,\ a\,,\ b}{c\,,\ d}\Big| 1\right)=\frac{(d-b)_n}{(d)_n} {_{3}F}_{2}\left( \genfrac{}{}{0pt}{0}{-n\, ,\ c-a\,,\ b}{c\,,\ 1+b-d-n}\Big| 1\right)\,,
\end{equation}
relation \eqref{eq:int2} becomes
\begin{align}
 R_{-2\lambda_{32}-\lambda_{21}}(X)&=\frac{(3\lambda_{32}-\lambda_{11}+\lambda_{21})_{-\lambda_{21}-2\lambda_{32}}}{(3\lambda_{32})_{-\lambda_{21}-2\lambda_{32}} }
 {_{2}F}_{1}\left( \genfrac{}{}{0pt}{0}{2\lambda_{32}+\lambda_{21}\, ,\ \lambda_{11}-\lambda_{21}}{\lambda_{11}+2\lambda_{32} }\Big| 1\right)\;,\\
 &=\frac{(3\lambda_{32}-\lambda_{11}+\lambda_{21})_{-\lambda_{21}-2\lambda_{32}}  (2\lambda_{32}+\lambda_{21})_{-\lambda_{21}-2\lambda_{32}}  }
 {(3\lambda_{32})_{-\lambda_{21}-2\lambda_{32}} (\lambda_{11}+2\lambda_{32})_{-\lambda_{21}-2\lambda_{32}}}
\,.
\end{align}
This proves relation \eqref{eq:1row}. Relation \eqref{eq:2row} is proven by remarking that $R_0(X)=1$.
\endproof

\section{General matrix elements for $SO(3)$ rotations in a general $\sl_3$ irrep
and a new family of bispectral trivariate hybrid functions}
\label{sec:Rgeneral}

\begin{theo} \label{pr:res}
The matrix elements of the change of basis
$\sigma$ induced by the rotation $S$ given by \eqref{eq:S} are, for $\Lambda,\Lambda'$ GT patterns,
 \begin{align}
  &\sigma_{\Lambda',\Lambda}=\sum_{n=\text{max}\{-\lambda_{21} , -\lambda'_{21}\} }^{\text{min}\{-\lambda_{22} , -\lambda'_{22}\}}
  \sum_{\ell=\ell_{min} }^{\text{min}\{\lambda_{31},n-\lambda_{33}\}  }
  \mu_{\Lambda',\Lambda}(\ell,n)
  K_{n+\lambda'_{21}}(\lambda'_{11}-\lambda'_{22};\sin^2(\phi),\lambda'_{21}-\lambda'_{22})\nonumber\\
 \times &\widetilde{R}_{\lambda_{31}-\lambda'_{21}}(\lambda_{31}-\ell;\lambda_{32}-\lambda_{31}-1,\lambda'_{21}+\lambda'_{22}+\lambda_{33}-1,n+\lambda'_{21}+\lambda'_{22}-\lambda_{31}-1,-\lambda'_{21}-\lambda'_{22}-\lambda_{31}-1)\nonumber\\
 \times  & K_{\ell+\lambda_{21}+\lambda_{22}}(\, \ell+\lambda'_{21}+\lambda'_{22} \,;\,\sin^2(\theta),2\ell-n)\nonumber\\
  \times &\widetilde R_{\lambda_{31}-\lambda_{21}}( \lambda_{31}-\ell;\lambda_{32}-\lambda_{31}-1,\lambda_{21}+\lambda_{22}+\lambda_{33}-1,n +\lambda_{21}+\lambda_{22}-\lambda_{31}-1,-\lambda_{21}-\lambda_{22}-\lambda_{31}-1)\nonumber\\
 \times &  K_{\lambda_{11}-\lambda_{22}}(n+\lambda_{21};  \sin^2(\chi),\lambda_{21}-\lambda_{22})\,,
 \label{eq:new_polynomials}
 \end{align}
where
$\ell_{min}=\text{max}\{\lambda_{32},n-\lambda_{32},-\lambda_{21}-\lambda_{22},n+\lambda_{21}+\lambda_{22},-\lambda'_{21}-\lambda'_{22},n+\lambda'_{21}+\lambda'_{22}\}$,
 \begin{align}
 \mu_{\Lambda',\Lambda}(\ell,n)&=(-1)^{\lambda'_{11}+\ell-n} t(\lambda'_{21},n+\lambda'_{21}+\lambda'_{22},\lambda'_{22})
  t(\ell,n+\lambda_{21}+\lambda_{22},n-\ell)\\
  \times &\frac{
  (\lambda'_{21}-\lambda'_{22})!\, (2\ell-n)!\, (\lambda_{21}-\lambda_{22})!\ \cos^{\lambda'_{21}-\lambda'_{22}}(\phi)\cos^{2\ell-n}(\theta) \cos^{\lambda_{21}-\lambda_{22}}(\chi)}
 {(n+\lambda'_{21})!(\lambda'_{21} -\lambda'_{11})!( \ell+\lambda_{21}+\lambda_{22})!
  (\ell-n-\lambda'_{21}-\lambda'_{22})!  (\lambda_{11}-\lambda_{22})!(-n-\lambda_{22})!}
  \nonumber \\
\times&\tan^{n+\lambda'_{11}+\lambda'_{21}-\lambda'_{22}}(\phi)
 \tan^{2\ell+\lambda'_{21}+\lambda'_{22}+\lambda_{21}+\lambda_{22}}(\theta)
 \tan^{n+\lambda_{11}+\lambda_{21}-\lambda_{22}}(\chi)\,, \nonumber
 \end{align}
 and $t(x,y,z)$ is defined by \eqref{eq:tbis}.
\end{theo}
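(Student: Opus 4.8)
The plan is to glue together the five elementary changes of basis that are already at our disposal. By the decomposition of Section~\ref{sec:problem}, the rotation $S=R^z_\chi T R^z_\theta T^{-1} R^z_\phi$ of \eqref{eq:S} induces $\sigma=\rho_\phi\,\tau^{-1}\,\rho_\theta\,\tau\,\rho_\chi$, so that
\[
\sigma_{\Lambda',\Lambda}=\sum_{\Lambda_1,\Lambda_2,\Lambda_3,\Lambda_4\in\mathcal{P}_\lambda}
(\rho_\phi)_{\Lambda',\Lambda_1}\,(\tau^{-1})_{\Lambda_1,\Lambda_2}\,(\rho_\theta)_{\Lambda_2,\Lambda_3}\,\tau_{\Lambda_3,\Lambda_4}\,(\rho_\chi)_{\Lambda_4,\Lambda}\,,
\]
where $\rho_\phi,\rho_\theta,\rho_\chi$ are the matrices \eqref{eq:rho} with angles $\phi,\theta,\chi$ and $\tau$ is \eqref{eq:tau}. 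First I would eliminate the inverse: since $\rho$ is orthogonal in the normalized basis, the relation \eqref{eq:ortho} gives $(\tau^{-1})_{\Lambda_1,\Lambda_2}=\bigl(N_{\Lambda_2}\bigr)^{2}\bigl(N_{\Lambda_1}\bigr)^{-2}\,\tau_{\Lambda_2,\Lambda_1}$; in particular the global sign left undetermined in \eqref{eq:tau} cancels between $\tau$ and $\tau^{-1}$, as noted after Corollary~\ref{coro:symmetric_racah}.

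Next I would resolve the Kronecker deltas. Every $\rho$-factor forces the two middle GT labels of adjacent patterns to coincide, and every $\tau$-factor forces the bottom labels to coincide while relating $\lambda_{\bullet21}+\lambda_{\bullet22}$ of one pattern to $\lambda_{\bullet11}-\lambda_{\bullet21}-\lambda_{\bullet22}$ of the next. Solving this linear system leaves exactly two residual summation integers, which I take to be $\ell:=\lambda_{2,21}=\lambda_{3,21}$ and $n:=\lambda_{2,21}+\lambda_{2,22}$; all the intermediate labels are then determined: $\lambda_{1,21}=\lambda'_{21}$, $\lambda_{1,22}=\lambda'_{22}$, $\lambda_{1,11}=\lambda_{2,11}=n+\lambda'_{21}+\lambda'_{22}$, $\lambda_{2,22}=\lambda_{3,22}=n-\ell$, $\lambda_{3,11}=\lambda_{4,11}=n+\lambda_{21}+\lambda_{22}$, $\lambda_{4,21}=\lambda_{21}$, $\lambda_{4,22}=\lambda_{22}$. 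Substituting these values into the polynomial arguments of \eqref{eq:rho} and \eqref{eq:tau} reproduces, factor by factor and with no special-function identity needed, the three Krawtchouk polynomials (coming from $\rho_\phi$, $\rho_\theta$, $\rho_\chi$) and the two Racah polynomials (coming from $\tau$ and $\tau^{-1}$) that appear in \eqref{eq:new_polynomials}; the orthogonal-matrix convention of \eqref{eq:rho} already delivers the degree $n+\lambda'_{21}$ for the $\phi$-Krawtchouk, so no Krawtchouk self-duality is required.

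The real work, and the main obstacle, is the bookkeeping of the scalar prefactor and of the summation ranges. Multiplying the five prefactors of \eqref{eq:rho}/\eqref{eq:tau} together with the ratio $\bigl(N_{\Lambda_2}\bigr)^{2}\bigl(N_{\Lambda_1}\bigr)^{-2}$ and re-expressing everything through $n$ and $\ell$, one obtains a product of powers of $\cos$ and $\tan$ of $\phi,\theta,\chi$, of factorials, of $t(\ell,n+\lambda_{21}+\lambda_{22},n-\ell)$ coming from $\tau$, of $t(\ell,n+\lambda'_{21}+\lambda'_{22},n-\ell)$ coming from $\tau^{-1}$, and of signs. The crux is the elementary but lengthy factorial identity
\[
\bigl(N_{\Lambda_2}\bigr)^{2}\bigl(N_{\Lambda_1}\bigr)^{-2}\;t(\ell,\,n+\lambda'_{21}+\lambda'_{22},\,n-\ell)=t(\lambda'_{21},\,n+\lambda'_{21}+\lambda'_{22},\,\lambda'_{22})\,,
\]
with $t$ as in \eqref{eq:tbis}; granting it, the remaining factorials and trigonometric powers regroup directly into $\mu_{\Lambda',\Lambda}(\ell,n)$, while summing the five sign exponents yields $(-1)^{\lambda'_{11}+\ell+n}=(-1)^{\lambda'_{11}+\ell-n}$, as stated. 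Finally, the bounds on $n$ and $\ell$ are forced by admissibility of the intermediate GT patterns $\Lambda_1,\dots,\Lambda_4$ (equivalently, by the zero-conventions on the Krawtchouk and Racah polynomials together with finiteness of the factorials in the $N_\Lambda$): the single betweenness inequality constraining the bottom label of $\Lambda_1$, resp. of $\Lambda_4$, gives $\max\{-\lambda_{21},-\lambda'_{21}\}\le n\le\min\{-\lambda_{22},-\lambda'_{22}\}$, and the full set of betweenness inequalities for $\Lambda_2$ and $\Lambda_3$ gives $\ell_{min}\le\ell\le\min\{\lambda_{31},n-\lambda_{33}\}$. A check at $\phi,\theta,\chi\to0$, where $\sigma$ must collapse to $\delta_{\Lambda',\Lambda}$, confirms that no sign has been lost along the way.
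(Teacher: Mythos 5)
Your proposal is correct and follows essentially the same route as the paper: decompose $\sigma=\rho_\phi\,\tau^{-1}\,\rho_\theta\,\tau\,\rho_\chi$, use the orthogonality relation \eqref{eq:ortho} to express $\tau^{-1}$ through $\tau$ and the norms $N_\Lambda$, resolve the Kronecker deltas leaving the two summation indices $n,\ell$, and read off the polynomial factors, prefactor and summation ranges. The factorial identity you isolate as the crux is exactly what the paper uses implicitly in passing to \eqref{eq:ttaui}, so your write-up just makes explicit the bookkeeping the paper compresses into ``combining everything together.''
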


\proof From the discussion at the end of Section \ref{sec:usl3_gt}, one gets
\begin{align}\label{eq:int1}
 \sigma_{\Lambda',\Lambda} =\sum_{\Lambda^{(1)},\Lambda^{(2)},\Lambda^{(3)},\Lambda^{(4)}\in \mathcal{P}_\lambda}
 (\rho_\phi)_{\Lambda',\Lambda^{(1)}}\, (\tau^{-1})_{\Lambda^{(1)},\Lambda^{(2)}}\, (\rho_{\theta})_{\Lambda^{(2)},\Lambda^{(3)}}\, (\tau)_{\Lambda^{(3)},\Lambda^{(4)}}\, (\rho_\chi)_{\Lambda^{(4)},\Lambda}
\end{align}
The components of $\rho_\phi$, $\rho_\theta$, $\rho_\chi$ are given by \eqref{eq:rho} and
the ones of $\tau$ by \eqref{eq:tau}.
The ones of $\tau^{-1}$ are computed as follows.
Using the orthogonality relation \eqref{eq:ortho}, one gets
\begin{equation}
 (\tau^{-1})_{\Lambda,\Lambda'}=\left(\frac{N_{\Lambda'}}{N_\Lambda}\right)^{2} \tau_{\Lambda',\Lambda}
\end{equation}
which leads to
\begin{align}\label{eq:ttaui}
 (\tau^{-1})_{\Lambda,\Lambda'}&= \delta_{\lambda'_{11}, \lambda_{11}}\delta_{\lambda_{21}+\lambda_{22}, \lambda'_{11}- \lambda'_{21}-\lambda'_{22}}
 t(\lambda_{21},\lambda_{11},\lambda_{22}) (-1)^{\lambda'_{22}-\lambda_{21}} \\
 \times  &
  \widetilde{R}_{\lambda_{31}-\lambda_{21}}(\lambda_{31}-\lambda'_{21} ;  \lambda_{32}-\lambda_{31}-1 ,\lambda_{21}+\lambda_{22} +\lambda_{33}-1 ,\lambda_{11}-\lambda_{31}-1 ,-\lambda_{21}-\lambda_{22} -\lambda_{31}-1)\,.\nonumber
\end{align}
The expression \eqref{eq:int1} can be expressed now as follows
\begin{align}
 \sigma_{\Lambda',\Lambda} =\sum_{\lambda_{21}^{(1)},\lambda_{11}^{(1)},\lambda_{22}^{(1)},\lambda_{21}^{(2)},\dots,\lambda_{22}^{(4)}}&
 \delta_{\lambda'_{21},\lambda_{21}^{(1)}} \delta_{\lambda'_{22},\lambda_{22}^{(1)}} \overline{\rho}_{\Lambda',\Lambda^{(1)}}\
 \delta_{\lambda_{11}^{(1)},\lambda_{11}^{(2)}} \delta_{\lambda_{21}^{(1)}+\lambda_{22}^{(1)},\lambda_{11}^{(2)}-\lambda_{21}^{(2)}-\lambda_{22}^{(2)}}  \widetilde{\tau}_{\Lambda^{(1)},\Lambda^{(2)}}\nonumber \\
\times& \delta_{\lambda_{21}^{(2)},\lambda_{21}^{(3)}} \delta_{\lambda_{22}^{(2)},\lambda_{22}^{(3)}}  \overline{\rho}_{\Lambda^{(2)},\Lambda^{(3)}}\
  \delta_{\lambda_{11}^{(3)},\lambda_{11}^{(4)}}  \delta_{\lambda_{21}^{(3)}+\lambda_{22}^{(3)},\lambda_{11}^{(4)}-\lambda_{21}^{(4)}-\lambda_{22}^{(4)}} \overline{\tau}_{\Lambda^{(3)},\Lambda^{(4)}}\nonumber \\
\times&\delta_{\lambda_{21}^{(4)},\lambda_{21}} \delta_{\lambda_{22}^{(4)},\lambda_{22}} \overline{\rho}_{\Lambda^{(4)},\Lambda}\,,
\end{align}
where  $\overline{\rho}_{\Lambda,\Lambda'}$ (resp. $\overline{\tau}_{\Lambda,\Lambda'}$,  $\widetilde{\tau}_{\Lambda,\Lambda'}$) are the expressions following the $\delta$'s in \eqref{eq:rho} (resp. \eqref{eq:tau}, \eqref{eq:ttaui}).
Combining everything together and setting $n=\lambda_{11}^{(1)}-\lambda'_{21}-\lambda'_{22}$, $\ell=\lambda_{21}^{(2)}$, the theorem is proven.
\endproof

The element $\sigma_{\Lambda',\Lambda}$ given in \eqref{eq:new_polynomials} can be seen as a function of the three variables $\Lambda'=(\lambda_{21}',\lambda_{11}',\lambda_{22}')$.
In the following, we shall give a difference relation satisfied by this function. We shall also provide a relation between the functions for different values of $\Lambda=(\lambda_{21},\lambda_{11},\lambda_{22})$, called recurrence relation,
in reference to the similar relations satisfied by the Krawtchouk or Racah polynomials.

\paragraph{Recurrence relations.}  To simplify the notations, we denote by $S_{ij}$ the entries of the rotation $S$ given by \eqref{eq:S}.
Relation \eqref{eq:fond1} for this rotation reads as follows, for any $g \in sl_3$,
\begin{equation}\label{eq:fonS}
 \sigma\cdot \Psi_S(g)  \cdot  \xi_\Lambda =  g  \cdot  \sigma  \cdot  \xi_\Lambda\,,
\end{equation}
and provides three different recurrence relations for $\sigma_{\Lambda',\Lambda}$ when one
chooses either $g=H,Y,J$.

Explicitly, for $g=H$, it reads:
\begin{align}
&\Big( (S_{11}S_{11}-S_{12}S_{12})\lambda_{11}+(S_{21}S_{21}-S_{22}S_{22})(\lambda_{21}+\lambda_{22}-\lambda_{11})-(S_{31}S_{31}-S_{32}S_{32})(\lambda_{21}+\lambda_{22}) \Big) \sigma_{\Lambda',\Lambda}\nonumber\\
+&(S_{11}S_{21}-S_{12}S_{22})(\lambda_{21}-\lambda_{11})(\lambda_{11}-\lambda_{22}+1)\sigma_{\Lambda',\Lambda+\delta^{11}}
+(S_{21}S_{11}-S_{22}S_{12})\sigma_{\Lambda',\Lambda-\delta^{11}}  \nonumber\\
+&(S_{21}S_{31}-S_{22}S_{32})\left( \frac{\lambda_{31}-\lambda_{21}   }{\lambda_{21}-\lambda_{22}+1}  \sigma_{\Lambda',\Lambda+\delta^{21}}
  +\frac{ \lambda_{31}-\lambda_{22}+1}{\lambda_{21}-\lambda_{22}+1} \sigma_{\Lambda',\Lambda+\delta^{22}}    \right) \nonumber\\
+&(S_{31}S_{21}-S_{32}S_{22})\left(\frac{(\lambda_{21}-\lambda_{32})(\lambda_{21}-\lambda_{33}+1)(\lambda_{21}-\lambda_{11})   }{\lambda_{21}-\lambda_{22}+1} \sigma_{\Lambda',\Lambda-\delta^{21}}\right. \nonumber\\
 &\hspace{3cm}\left. +\frac{(\lambda_{11}-\lambda_{22}+1)(\lambda_{32}-\lambda_{22} +1)(\lambda_{22}-\lambda_{33})  }{\lambda_{21}-\lambda_{22}+1} \sigma_{\Lambda',\Lambda-\delta^{22}}    \right) \nonumber\\
+&(S_{11}S_{31}-S_{12}S_{32})\left( \frac{(\lambda_{11}-\lambda_{22}+1)(\lambda_{31}-\lambda_{21})   }{\lambda_{21}-\lambda_{22}+1}   \sigma_{\Lambda',\Lambda+\delta^{11}+\delta^{21}}
 -\frac{(\lambda_{21}-\lambda_{11})( \lambda_{31}-\lambda_{22}+1)}{\lambda_{21}-\lambda_{22}+1}  \sigma_{\Lambda',\Lambda+\delta^{11}+\delta^{22}}    \right) \nonumber\\
 +&(S_{31}S_{11}-S_{32}S_{12})\left( \frac{(\lambda_{21}-\lambda_{32})(\lambda_{21}-\lambda_{33}+1)  }{\lambda_{21}-\lambda_{22}+1}  \sigma_{\Lambda',\Lambda-\delta^{11}-\delta^{21}}
  -\frac{(\lambda_{32}-\lambda_{22} +1)(\lambda_{22}-\lambda_{33})  }{\lambda_{21}-\lambda_{22}+1} \sigma_{\Lambda',\Lambda-\delta^{11}-\delta^{22}}    \right) \nonumber\\
=&(2\lambda'_{11} -\lambda'_{21} -\lambda'_{22} )\sigma_{\Lambda',\Lambda}\,.
\end{align}
For $g=Y,J$, the recurrence relations can be computed similarly.
The `hybrid' property of the functions $\sigma_{\Lambda',\Lambda}$ shows up in particular
in the fact that the eigenvalue in the R.H.S. is also linear in $\Lambda'$ for $g=Y$
while it is quadratic for $g=J$.

\paragraph{Difference relations.}
To get the difference relations, replace $g$ by $\Psi_{S^t}(g)$ in relation
\eqref{eq:fonS} to find
\begin{equation}\label{eq:fonS2}
 \sigma\cdot g  \cdot  \xi_\Lambda =  \Psi_{S^t}(g)  \cdot  \sigma  \cdot  \xi_\Lambda\,,
\end{equation}
where $S^t$ stands for the transposition of $S$ and we have used that $\Psi_{S}(\Psi_{S^t}(g)) = g$.
Using this relation for $g=H,Y,J$, one gets three difference relations.
Namely, for $g=H$, it becomes
\begin{align}
&(2\lambda_{11} -\lambda_{21} -\lambda_{22} )\sigma_{\Lambda',\Lambda}=\\
&\Big( (S_{11}S_{11}-S_{21}S_{21})\lambda'_{11}+(S_{12}S_{12}-S_{22}S_{22})(\lambda'_{21}+\lambda'_{22}-\lambda'_{11})-(S_{13}S_{13}-S_{23}S_{23})(\lambda'_{21}+\lambda'_{22}) \Big) \sigma_{\Lambda',\Lambda}\nonumber\\
+&(S_{11}S_{12}-S_{21}S_{22})(\lambda'_{21}-\lambda'_{11}+1)(\lambda'_{11}-\lambda'_{22})\sigma_{\Lambda'-\delta^{11},\Lambda}
+(S_{12}S_{11}-S_{22}S_{21})\sigma_{\Lambda'+\delta^{11},\Lambda}  \nonumber\\
+&(S_{12}S_{13}-S_{22}S_{23})\left( \frac{\lambda'_{31}-\lambda'_{21}  +1 }{\lambda'_{21}-\lambda'_{22}}  \sigma_{\Lambda'-\delta^{21},\Lambda}
  +\frac{ \lambda'_{31}-\lambda'_{22}+2}{\lambda'_{21}-\lambda'_{22}+2} \sigma_{\Lambda'-\delta^{22},\Lambda}    \right) \nonumber\\
+&(S_{13}S_{12}-S_{23}S_{22})\left(\frac{(\lambda'_{21}-\lambda'_{32}+1)(\lambda'_{21}-\lambda'_{33}+2)(\lambda'_{21}-\lambda'_{11}+1)   }{\lambda'_{21}-\lambda'_{22}+2} \sigma_{\Lambda'+\delta^{21},\Lambda}\right. \nonumber\\
 &\hspace{3cm}\left. +\frac{(\lambda'_{11}-\lambda'_{22})(\lambda'_{32}-\lambda'_{22} )(\lambda'_{22}-\lambda'_{33}+1)  }{\lambda'_{21}-\lambda'_{22}} \sigma_{\Lambda'+\delta^{22},\Lambda}    \right) \nonumber\\
  +&(S_{11}S_{13}-S_{21}S_{23})\left( \frac{(\lambda'_{11}-\lambda'_{22})(\lambda'_{31}-\lambda'_{21}+1)   }{\lambda'_{21}-\lambda'_{22}}   \sigma_{\Lambda'-\delta^{11}-\delta^{21},\Lambda}
  -\frac{(\lambda'_{21}-\lambda'_{11}+1)( \lambda'_{31}-\lambda'_{22}+2)}{\lambda'_{21}-\lambda'_{22}+2}  \sigma_{\Lambda'-\delta^{11}-\delta^{22},\Lambda}    \right) \nonumber\\
  +&(S_{13}S_{11}-S_{23}S_{21})\left( \frac{(\lambda'_{21}-\lambda'_{32}+1)(\lambda'_{21}-\lambda'_{33}+2)  }{\lambda'_{21}-\lambda'_{22}+2}  \sigma_{\Lambda'+\delta^{11}+\delta^{21},\Lambda}
  -\frac{(\lambda'_{32}-\lambda'_{22} )(\lambda'_{22}-\lambda'_{33}+1)  }{\lambda'_{21}-\lambda'_{22}} \sigma_{\Lambda'+\delta^{11}+\delta^{22},\Lambda}    \right)\,. \nonumber
\end{align}
For $g=Y,J$, similar relations are obtained.
Again, the `hybrid' property of the functions $\sigma_{\Lambda',\Lambda}$
is illustrated by the fact that the eigenvalue in the L.H.S. is also linear in $\Lambda$
for $g=Y$ while it is quadratic for $g=J$.

These sets of recurrence relations and difference relations establish the bispectrality
of the functions introduced in \eqref{eq:new_polynomials}.

\section{Particular examples}
\label{sec:examples}

\subsection{Bivariate Krawtchouk polynomials of Griffiths type}
Let us consider the case of symmetric representations of $\sl_3$ which has also been
looked at in Corollary \ref{coro:symmetric_racah}. We recall that the GT pattern of the symmetric representations are such that
$\lambda_{32}=\lambda_{33}$, $\lambda_{31}=-2\lambda_{33}$ which implies that $\lambda_{22} =\lambda_{32}=\lambda_{33}$.

\begin{theo} \label{theo:symmetric}
The matrix elements of the change of basis
$\sigma$ induced by the rotation $S$ given by \eqref{eq:S} are, for $\Lambda,\Lambda'$ GT patterns of symmetric
representations,
\begin{align}\label{eq:krawtchouk_griffiths}
\begin{aligned}
 \sigma_{\Lambda,\Lambda'}
 &=\sum_{\ell=max\{0,\lambda_{21}-\lambda'_{21}\}}^{\lambda_{21}-\lambda_{33}}
 f_\ell(\lambda_{21},\lambda_{11},\lambda'_{21},\lambda'_{11})~
 K_{\ell}(\lambda_{11}-\lambda_{33};\sin^{2}\phi,\lambda_{21}-\lambda_{33})\\
 &\qquad\times K_{\ell+\lambda'_{21}-\lambda_{21}}(\ell;
                         \sin^{2}\theta,\ell-\lambda_{21}-2\lambda_{33})~
 K_{\lambda'_{11}-\lambda_{33}}(\ell+\lambda'_{21}-\lambda_{21},
                         \sin^{2}\chi,\lambda'_{21}-\lambda_{33})\,,
\end{aligned}
\end{align}
with
\begin{align}\label{}
\begin{aligned}
 f_\ell(\lambda_{21},\lambda_{11},\lambda'_{21},\lambda'_{11})&=
 (-1)^{\lambda_{11}-2\lambda_{21}+2\lambda_{33}-\lambda'_{22}}
(\tan\phi)^{\ell+\lambda_{11}-\lambda_{33}}
(\tan\theta)^{2\ell+\lambda'_{21}-\lambda_{21}}
(\tan\chi)^{\ell+\lambda'_{21}-\lambda_{21}+\lambda'_{11}-\lambda_{33}}\\
 &\qquad\times
\frac{
(\cos\phi)^{\lambda_{21}-\lambda_{33}}
(\cos\theta)^{\ell-\lambda_{21}-2\lambda_{33}}
(\cos\chi)^{\lambda'_{21}-\lambda_{33}}
}
{\ell!(\ell+\lambda'_{21}-\lambda_{21})!(\ell-\lambda_{21}-2\lambda_{33})!
 (-\ell+\lambda_{21}-\lambda_{33})!}
\\
 &\qquad\times
\frac{[(\lambda_{21}-\lambda_{33})!(\lambda'_{21}-\lambda_{33})!]^{2}}
{(\lambda_{21}-\lambda_{11})!(\lambda'_{11}-\lambda_{33})!(-2\lambda_{33}-\lambda_{21})}
\,.
\end{aligned}
\end{align}
\end{theo}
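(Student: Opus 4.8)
The plan is to derive Theorem~\ref{theo:symmetric} as a specialization of the general Theorem~\ref{pr:res}, by showing that when $\Lambda$ and $\Lambda'$ are GT patterns of symmetric representations (so $\lambda_{22}=\lambda_{32}=\lambda_{33}$, $\lambda_{31}=-2\lambda_{33}$), the two Racah factors $\widetilde R_{\lambda_{31}-\lambda'_{21}}(\cdots)$ and $\widetilde R_{\lambda_{31}-\lambda_{21}}(\cdots)$ appearing in \eqref{eq:new_polynomials} each collapse to a closed-form ratio of Pochhammer symbols, exactly as was done in the proof of Corollary~\ref{coro:symmetric_racah}. First I would substitute the constraints of the symmetric representation into the summation bounds of \eqref{eq:new_polynomials}: the constraint $\lambda_{22}=\lambda_{33}$ forces $-n-\lambda_{22}=\ell-n-\lambda'_{21}-\lambda'_{22}$-type relations to simplify and, crucially, pins several of the summed variables, so that the double sum over $(\ell,n)$ degenerates to a single sum over $\ell$. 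I expect the index $n$ to be fixed in terms of $\ell$ by the $\delta$'s hidden in the original $\tau$ and $\tau^{-1}$ (cf.\ the extra Kronecker deltas $\delta_{\lambda'_{22},\lambda_{32}}\delta_{\lambda_{22},\lambda_{32}}$ in Corollary~\ref{coro:symmetric_racah}), leaving precisely the range $\ell=\max\{0,\lambda_{21}-\lambda'_{21}\}$ to $\lambda_{21}-\lambda_{33}$ claimed in \eqref{eq:krawtchouk_griffiths}.

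Next I would handle the two Racah polynomials. For the symmetric representation one of the lower parameters of each ${}_4F_3$ equals one of the numerator parameters (because $\lambda_{32}-\lambda_{31}-1$ collides with another entry once $\lambda_{32}=\lambda_{33}$), so each ${}_4F_3$ truncates to a ${}_3F_2$; applying Thomae's transformation (see \cite[relation~(3.1.1)]{GasperRahman2004}), exactly as in the proof of Corollary~\ref{coro:symmetric_racah}, reduces each ${}_3F_2$ to a ${}_2F_1$, which is summable by Gauss, giving a ratio of Pochhammer symbols. I would then absorb these two Pochhammer ratios into the prefactor $\mu_{\Lambda',\Lambda}(\ell,n)$; this is the step where one must be careful, since the Pochhammer ratios carry $\ell$-dependence that has to be reorganized into the factorials appearing in $f_\ell$.

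The remaining three Krawtchouk polynomials in \eqref{eq:new_polynomials} specialize directly: with $\lambda_{22}=\lambda_{33}$ one has $n+\lambda'_{21}=\ell+\lambda'_{21}-\lambda_{21}$ after using the fixed value of $n$, and the substitutions $\lambda_{21}'-\lambda_{22}'\to\lambda_{21}'-\lambda_{33}$, $2\ell-n\to \ell-\lambda_{21}-2\lambda_{33}$, $\lambda_{21}-\lambda_{22}\to\lambda_{21}-\lambda_{33}$ turn the three $K$'s into the three Krawtchouk factors of \eqref{eq:krawtchouk_griffiths}. Finally I would collect all the $\tan$ and $\cos$ powers and the surviving factorials, match them against $f_\ell$, and verify the overall sign $(-1)^{\lambda_{11}-2\lambda_{21}+2\lambda_{33}-\lambda'_{22}}$ by comparing with $(-1)^{\lambda'_{11}+\ell-n}$ and the signs produced by Thomae's transformation. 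The main obstacle will be the bookkeeping: tracking how the summation variable $n$ gets eliminated, confirming that the truncation of both ${}_4F_3$'s really does occur in the symmetric case (this requires checking that the degree stays within the truncation range for every term of the $\ell$-sum, not just generically), and then reorganizing the resulting product of Pochhammer ratios, factorials, and trigonometric powers into precisely the form of $f_\ell$ without sign or shift errors.
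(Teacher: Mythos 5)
Your proposal is correct and follows essentially the paper's own route: the paper also derives Theorem \ref{theo:symmetric} from the product computation underlying Theorem \ref{pr:res} combined with the symmetric-representation collapse of the Racah factors established in Corollary \ref{coro:symmetric_racah} (Thomae's transformation plus a summable ${}_2F_1$), the only cosmetic difference being that the paper inserts the corollary's simplified $\tau$ and $\tau^{-1}$ into the product $\rho_\phi\,\tau^{-1}\,\rho_\theta\,\tau\,\rho_\chi$ before summing (with the single index $\ell=\lambda^{(1)}_{11}-\lambda_{33}$), whereas you specialize the assembled double-sum formula and note that the second index is pinned because the intermediate pattern's $\lambda_{22}$ must equal $\lambda_{33}$ in the symmetric representation. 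Either way the remaining work is exactly the bookkeeping you describe, so there is no gap.
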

\proof
The calculation proceeds similarly to what was done in Theorem \ref{pr:res}, this
time using the expressions for the symmetric representation in Corollary
\ref{coro:symmetric_racah}. Combining everything together and setting
$\ell=\lambda_{11}^{(1)}-\lambda_{33}$, the result follows.
\endproof
Making use of the symmetry property of Krawtchouk polynomials
\begin{align}\label{}
 K_{n}(x;p,N)=\frac{n!}{(N-n)!}(-1)^{x+n}p^{n-x}(1-p)^{x+n-N}
  K_{N-n}(N-x;p,N),
\end{align}
one can compare with the expressions found in
\cite[Equations~(10.4)--(10.5)]{GenestVinetetal2013b} for the bivariate
Griffiths--Krawtchouk polynomials.

\subsection{Bivariate hybrid functions in terms of Krawtchouk and Racah polynomials}

We consider the transformation $T$ followed by a rotation around the $z$-axis:
\begin{align}S= R_{\eta}^zT =\begin{pmatrix}
                                  \cos(\eta) & 0 & \sin(\eta)\\
                                 - \sin(\eta)&0&  \cos(\eta) \\
                                 0&-1 & 0
                                 \end{pmatrix}\ .
                                 \end{align}
It can be obtained with the following choice of angles in the parametrization of a general
rotation used in the preceding sections:
\begin{align}\theta=\frac{\pi}{2}\,,\ \ \ \ \ \phi=
 -\frac{\pi}{2}\,,\ \ \ \ \ \ \chi=\eta+\frac{\pi}{2}\ .\end{align}
It is simpler to calculate the matrix elements of $S$ directly rather than putting these
parameters in the general formula. Indeed, we have:
\begin{align} \sigma_{\Lambda',\Lambda}=\sum_{\Lambda^{(1)}}\tau_{\Lambda',\Lambda^{(1)}}(\rho_{\eta})_{\Lambda^{(1)},\Lambda}\ .\end{align}
Using the explicit expressions found for the matrix elements $\tau$ and $\rho_{\eta}$, we
get the following result.
\begin{prop} For $\Lambda,\Lambda'$ GT patterns, the matrix elements of the change of basis $\sigma$ corresponding to the rotation $S$ are given (up to a
global undetermined sign) by  ,
\begin{align}
 \sigma_{\Lambda'\Lambda}
 =&\delta_{\lambda_{21}+\lambda_{22}, \lambda'_{11}- \lambda'_{21}-\lambda'_{22}}\,
  t(\lambda'_{21},\lambda'_{11},\lambda'_{22})
 (-1)^{2\lambda_{21}+\lambda'_{21}}
 \frac{(\lambda_{21}-\lambda_{22})!\tan^{\lambda'_{11}+\lambda_{11}-2\lambda_{22}}(\eta)
      \cos^{\lambda_{21}-\lambda_{22}}(\eta)}
     {(\lambda_{11}-\lambda_{22})!(\lambda_{21}-\lambda'_{11})!} \nonumber \\
  \times & K_{\lambda_{11}-\lambda_{22}}(\lambda'_{11}-\lambda_{22};
              \sin^2(\eta),\lambda_{21}-\lambda_{22})
  ~\widetilde{R}_{\lambda_{31}-\lambda_{21}}(\lambda_{31}-\lambda'_{21}  ;
              \alpha ,\beta ,\gamma ,\delta)\,,
\end{align}
with
\begin{align}
\begin{aligned}
 &\alpha= \lambda_{32}-\lambda_{31}-1\,,\qquad\quad \beta=\lambda'_{11}-\lambda'_{21}-\lambda'_{22} +\lambda_{33}-1\,,\\
  &\gamma= \lambda'_{11}-\lambda_{31}-1\,,\qquad\quad \delta= \lambda'_{21}+\lambda'_{22}-\lambda'_{11} -\lambda_{31}-1\,.
\end{aligned}
\end{align}
\end{prop}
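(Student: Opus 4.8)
The plan is to compute $\sigma_{\Lambda',\Lambda}$ directly from the factorization $S=R^z_\eta T$, rather than by specializing the general formula \eqref{eq:new_polynomials}: at the angle values $\theta=\pi/2$, $\phi=-\pi/2$, $\chi=\eta+\pi/2$ one has $\cos\phi=\cos\theta=0$ while the matching powers of $\tan\phi$ and $\tan\theta$ diverge, so \eqref{eq:new_polynomials} is an indeterminate form there. Instead we use the composition $\sigma=\tau\,\rho_\eta$ displayed just before the statement, i.e.
\begin{equation*}
\sigma_{\Lambda',\Lambda}=\sum_{\Lambda^{(1)}\in\mathcal{P}_\lambda}\tau_{\Lambda',\Lambda^{(1)}}\,(\rho_\eta)_{\Lambda^{(1)},\Lambda}\,,
\end{equation*}
which holds because $\Psi_{R^z_\eta}\circ\Psi_T=\Psi_{R^z_\eta T}=\Psi_S$, exactly as for the five-factor decomposition in Section~\ref{sec:Rgeneral}. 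Here $\tau$ is given by \eqref{eq:tau} and $\rho_\eta$ by \eqref{eq:rho} with $\phi$ replaced by $\eta$, and the global sign ambiguity of the statement is just the one already carried by $\tau$.

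The first step is to substitute these two expressions and observe that the intermediate sum collapses. The Kronecker deltas of $\tau_{\Lambda',\Lambda^{(1)}}$ include $\delta_{\lambda'_{11},\lambda_{11}^{(1)}}$, while those of $(\rho_\eta)_{\Lambda^{(1)},\Lambda}$ are $\delta_{\lambda_{21}^{(1)},\lambda_{21}}\delta_{\lambda_{22}^{(1)},\lambda_{22}}$; together they pin $\Lambda^{(1)}=(\lambda_{21},\lambda'_{11},\lambda_{22})$, so a single term survives and no genuine summation remains, consistent with the sum-free statement. The one remaining delta, the second one carried by $\tau$, reads $\delta_{\lambda_{21}^{(1)}+\lambda_{22}^{(1)},\,\lambda'_{11}-\lambda'_{21}-\lambda'_{22}}$, which after the substitution becomes $\delta_{\lambda_{21}+\lambda_{22},\,\lambda'_{11}-\lambda'_{21}-\lambda'_{22}}$, the constraint prefacing the claimed formula.

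It then remains to multiply the two prefactors, everything being evaluated at $\lambda_{11}^{(1)}=\lambda'_{11}$. The Krawtchouk polynomial $K_{\lambda_{11}-\lambda_{22}}(\lambda'_{11}-\lambda_{22};\sin^2(\eta),\lambda_{21}-\lambda_{22})$, the factor $t(\lambda'_{21},\lambda'_{11},\lambda'_{22})$ with $t$ as in \eqref{eq:tbis}, and the monomial $\frac{(\lambda_{21}-\lambda_{22})!}{(\lambda_{11}-\lambda_{22})!(\lambda_{21}-\lambda'_{11})!}\tan^{\lambda'_{11}+\lambda_{11}-2\lambda_{22}}(\eta)\cos^{\lambda_{21}-\lambda_{22}}(\eta)$ carry over unchanged, and the Racah polynomial inherited from $\tau_{\Lambda',\Lambda^{(1)}}$ acquires, once $\lambda_{21}+\lambda_{22}$ is replaced by $\lambda'_{11}-\lambda'_{21}-\lambda'_{22}$ via the surviving delta, exactly the parameters $\alpha,\beta,\gamma,\delta$ listed in the statement. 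The only genuine bookkeeping is the sign: the product of the two signs is $(-1)^{(\lambda'_{22}-\lambda_{21})+(\lambda'_{11}-\lambda_{22})}$, and eliminating $\lambda'_{11}$ through that delta turns the exponent into $2\lambda'_{22}+\lambda'_{21}$; since $\lambda'_{22}$ and $\lambda_{21}$ are both congruent to $\lambda_{33}$ modulo $\mathbb{Z}$ (being differences of entries of GT patterns for the same highest weight), $(-1)^{2\lambda'_{22}}=(-1)^{2\lambda_{21}}$ and the sign equals $(-1)^{2\lambda_{21}+\lambda'_{21}}$, as claimed.

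I expect no real obstacle: the whole computation is a direct composition of the two changes of basis already found in Sections~\ref{sec:Rz_krawtchouk} and~\ref{sec:T_racah}, the only care needed being to keep straight which of the two GT patterns plays the ``primed'' role in \eqref{eq:tau} and \eqref{eq:rho} when forming $\tau_{\Lambda',\Lambda^{(1)}}$ and $(\rho_\eta)_{\Lambda^{(1)},\Lambda}$, together with the small sign reconciliation above. As an independent check one may verify that the resulting $\sigma_{\Lambda',\Lambda}$ satisfies \eqref{eq:fond1} for $R=S$ and $g=H,Y,J$; since $\Psi_S$ fixes neither $Y$ nor $J$ in this case, these give genuine recurrence and difference relations mixing a single Krawtchouk shift with a single Racah shift, which is precisely the bivariate ``hybrid'' structure.
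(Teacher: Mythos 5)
Your proposal is correct and follows exactly the paper's route: the paper also obtains this proposition by writing $\sigma_{\Lambda',\Lambda}=\sum_{\Lambda^{(1)}}\tau_{\Lambda',\Lambda^{(1)}}(\rho_\eta)_{\Lambda^{(1)},\Lambda}$ for $S=R^z_\eta T$ and substituting the explicit expressions \eqref{eq:tau} and \eqref{eq:rho}, the deltas collapsing the intermediate sum. Your extra bookkeeping (the surviving delta, the parameter substitution $\lambda_{21}+\lambda_{22}\to\lambda'_{11}-\lambda'_{21}-\lambda'_{22}$, and the sign reconciliation $(-1)^{\lambda'_{22}-\lambda_{21}+\lambda'_{11}-\lambda_{22}}=(-1)^{2\lambda_{21}+\lambda'_{21}}$) is accurate and simply spells out what the paper leaves implicit.
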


In the previous proposition, a product of Krawtchouk and Racah polynomials appears in each sector where $\lambda_{21}+\lambda_{22}= \lambda'_{11}- \lambda'_{21}-\lambda'_{22}$.
It naturally suggests to consider the following bivariate function:
\begin{align}\label{hybrid_KR}
 P_{n_1,n_2}(x_1,x_2)
 =& K_{n_1}(x_1-n_2;
              \sin^2(\eta),N-2n_2)
  ~\widetilde{R}_{n_2}(x_2  ;
              \alpha ,\beta,x_1-N-1 ,\delta)\,,
\end{align}
where $N$ is a positive integer and $\alpha,\beta,\delta,\eta$ are parameters. In the
formula of the proposition, $N$ corresponds to $2\lambda_{31}-\lambda_{21}-\lambda_{22}$ and
 $x_1$, $x_2$, $n_1$ and $n_2$  to
 \begin{subequations}
\begin{align}
 & x_1=\lambda'_{22}+\lambda'_{21}+\lambda_{31}\,, && x_2=\lambda_{31}-\lambda'_{21}\,,\\
 &n_1=\lambda_{11}-\lambda_{22}\,,&& n_2 =\lambda_{31}-\lambda_{21}\,.
\end{align}
\end{subequations}
For these choices, the L.H.S. of the recurrence relations of $P_{n_1,n_2}(x_1,x_2)$ obtained by \eqref{eq:fonS} involves only terms of the form $P_{n_1',n_2'}(x_1,x_2)$.
More precisely, \eqref{eq:fonS} for $g=e_{11}$ provides a recurrence relation involving only $ P_{n_1+\varepsilon ,n_2}(x_1,x_2)$ and \eqref{eq:fonS} for $g=e_{21}e_{12}$
provides a recurrence relation involving only $ P_{n_1+\varepsilon,n_2}(x_1,x_2)$, $ P_{n_1,n_2+\varepsilon}(x_1,x_2)$, $ P_{n_1+\varepsilon,n_2-\varepsilon}(x_1,x_2)$ and $ P_{n_1+2\varepsilon,n_2-\varepsilon}(x_1,x_2)$, with $\varepsilon\in\{-1,0,1\}$.
Similarly, the R.H.S. of the difference relations of $ P_{n_1,n_2}(x_1,x_2)$ obtained by \eqref{eq:fonS2} involves only $ P_{n_1,n_2}(x_1',x_2')$:
for $g=e_{11}$, one obtains   $ P_{n_1,n_2}(x_1+\varepsilon,x_2)$ and $ P_{n_1,n_2}(x_1+\varepsilon,x_2-\varepsilon)$
and for $g=J$, one obtains  $P_{n_1,n_2}(x_1,x_2+\varepsilon)$, with $\varepsilon\in\{-1,0,1\}$.

Formula \eqref{hybrid_KR} resembles very much the Tratnik construction of bivariate orthogonal
polynomials, except that here it is an hybrid of two different univariate polynomials.
Such hybrid constructions already appeared, see for example \cite{Dunkl1976,
CrampeVinetetal2023} for a formula like \eqref{hybrid_KR} using dual Hahn polynomials
instead of Racah polynomials.

\section{The Racah algebra in $U(sl_3)$ and the centralizer of the Cartan subalgebra}
\label{sec:Racah-algebra}
This section contains some algebraic results concerning the elements $J$ and
$\Psi_T(J)$ introduced previously in Section \ref{sec:T_racah}.

\paragraph{Racah algebra.}
The observation that the change of basis associated to $T$ is related to the Racah
polynomials can be understood algebraically by remarking that $J$ and $\overline
J=\Psi_T(J)$ satisfy the relations of the Racah algebra \eqref{eq:racah3}.
Indeed, let us define
\begin{subequations}\label{Racah-sl3}
\begin{equation}
 K:= \left[\,J,\overline J\, \right]= e_{31}e_{12}e_{23}-e_{32}e_{21}e_{13}\,.
\end{equation}
Then, by a lengthy but straightforward calculation using the defining relations of $U(sl_3)$, one can show that
\begin{align}
&[J,K]=2J^2+2\{J,\overline{J}\}-a J +b^+\,,\\
&[K,\overline{J}]=2\overline{J}^2+2\{J,\overline{J}\} -a\overline{J} +b^-\,,
\end{align}
\end{subequations}
where
\begin{subequations}\label{eq:realization_racah_central_elements}
\begin{align}
 &a=h^2+3 y^2+C_2 \,,\\
& b^\pm=(3y\mp h)\left( \frac{(2-y\mp h)C_2}{4}-\frac{C_3}{3}+\frac{(y-2\pm h)(y+2\pm h)(y\mp h)}{8} \right)\,.
\end{align}
\end{subequations}
The Casimir elements $C_2$, $C_3$ of $U(sl_3)$ are defined by \eqref{CEsl3} and
$h=\tfrac{1}{2}(e_{22}-e_{33})$, $y=\tfrac{1}{6}(2e_{11}-e_{22}-e_{33})$ are Cartan
elements stable under $T$.

Relations \eqref{Racah-sl3} are those of the Racah algebra\footnote{
Up to a trivial rescaling of the generators such as $J=-2K_2$, $\overline{J}=-2K_1$,
$a=4d$, $b^{+}=8e_1$, $b^{-}=8e_2$.
}.
This realization interestingly adds to others previously found, namely between the
recurrence and the difference operators of the Racah polynomials \cite{Zhedanov1991,
GenestVinetetal2014, GenestVinetetal2014a},
in terms of the intermediate Casimir elements of $U(sl_2)^{\otimes 3}$
\cite{GranovskiiZhedanov1988} or in terms of the elements of $U(sl_2)$
\cite{GranovskiiZhedanov1993a, CrampeShaabankabakibo2021}.
The result here provides a realization of the Racah algebra in terms of the elements
of $U(sl_3)$ (also see \cite{CorreaDelolmoetal2021,
CampoamorstursbergLatinietal2023}).

It is well-known that there exists a central element in the Racah algebra given by
\begin{equation}
 \Gamma=2\{J^2,\overline{J}\}+2\{J,\overline{J}^2\}-K^2-4( J+\overline{J})^2
 -a\{J,\overline{J}\}+2(b^- +a)J+2(b^+ + a)\overline{J}\, .
\end{equation}
In the realization \eqref{Racah-sl3}-\eqref{eq:realization_racah_central_elements}
introduced here, this central element takes a definite value in terms of the
other central elements of the Racah algebra:
\begin{align}\label{sRacah-sl3}
 \Gamma=&\frac{1}{2}(y^2+h^2)^2(1-C_2) -\frac{1}{8}(h^2-y^2)^3+2 h^2 y^2  -\left(\frac{C_3}{3}+2y+y C_2\right)\left( \frac{C_3}{3} +2y+yC_2-C_2\right) \nonumber\\
 &-\frac{y}{6}(3h^2-11y^2)(3C_2-2C_3)+\frac{C_2}{8}(h^2+3y^2)(5h^2-y^2+4)\,.
\end{align}
The quotient of the Racah algebra by the previous relation \eqref{sRacah-sl3} is called
the special Racah algebra \cite{CrampeGaboriaudetal2021}.
The terminology ``special'' comes from \cite{CrampeFrappatetal2021}, where a similar
quotient, called ``special Askey–Wilson algebra'', was introduced for the
Askey--Wilson algebra.
This type of quotient appears in various other situations and, in particular, as the
diagonal centralizer of $U(sl_2)$ in $U(sl_2)^{\otimes 3}$ \cite{CrampeGaboriaudetal2021}.

\paragraph{Isomorphism with a centralizer.}
In this paragraph, the centralizer $\mathcal{Z}$ of the Cartan subalgebra in $U(sl_3)$ is
described using the elements previously introduced.
The centralizer is defined as follows:
\begin{align}
 \mathcal{Z}=\{x\in U(sl_3)\ |\ [x,H_{i}]=0\,,\ i=1,2\}\,,
\end{align}
where $H_1=\tfrac12(e_{11}-e_{22})$ and $H_2=\tfrac12(e_{22}-e_{33})$.
Assigning degree $1$ to each generator $e_{ij}$, the algebra $U(sl_3)$ is filtered and so
is its subalgebra $\mathcal{Z}$, which allows to describe $\mathcal{Z}$ as in the
following proposition.
\begin{prop}
The algebra $\mathcal{Z}$ is generated by
\begin{equation}\label{generatorsZ}
J\, ,\overline J\, , K\,, H_1,\, H_2,\, C_2,\, C_3\,,
\end{equation}
and its Hilbert--Poincar\'e series is:
\begin{equation}\label{HPZ}
F_{\mathcal{Z}}(t)=\frac{1+t^3}{(1-t)^2(1-t^2)^3(1-t^3)}\,.
\end{equation}
\end{prop}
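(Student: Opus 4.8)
The plan is to establish the two assertions separately: first that the listed elements generate $\mathcal{Z}$, and then that the Hilbert--Poincar\'e series has the claimed form. For the generation statement, I would start from the observation that $\mathcal{Z}$ is the zero-weight space of $U(sl_3)$ under the adjoint action of the Cartan subalgebra, equivalently the span of ordered PBW monomials in the $e_{ij}$ whose total weight vanishes. Writing a PBW basis ordered so that the three Cartan generators $e_{11},e_{22},e_{33}$ (constrained by $\sum e_{ii}=0$) come last, a zero-weight monomial is a product of a weight-zero monomial in the off-diagonal generators times an arbitrary polynomial in the Cartan. The weight-zero monomials in the off-diagonal generators form, at the level of the associated graded, the semigroup algebra of lattice points in the cone of non-negative combinations of the positive roots that sum to zero; this cone is generated by the "loops" $\alpha_{12}+\alpha_{23}+\alpha_{31}$ read in the two cyclic orders, which lifts precisely to $e_{12}e_{23}e_{31}$ and $e_{13}e_{32}e_{21}$ and their products with $e_{12}e_{21}$, $e_{23}e_{32}$, $e_{13}e_{31}$. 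One then checks that $J$, $\overline{J}$, $K$ together with $H_1,H_2$ and the two Casimirs $C_2,C_3$ generate all of these: $J$ and $\overline{J}$ supply (modulo lower order) $e_{21}e_{12}$ and $e_{31}e_{13}$, $K$ supplies $e_{31}e_{12}e_{23}-e_{32}e_{21}e_{13}$, the symmetric companion $e_{31}e_{12}e_{23}+e_{32}e_{21}e_{13}$ is recovered from $\{J,\overline{J}\}$, $C_3$ brings in $e_{23}e_{32}$ independently, and $C_2$ provides the last needed combination. A clean way to organize this is by induction on the filtration degree: given $x\in\mathcal{Z}$ of degree $m$, its top symbol lies in the weight-zero part of $S(sl_3)$, which by the above cone description is a polynomial in the symbols of the seven generators; subtract the corresponding polynomial in $J,\overline{J},K,H_1,H_2,C_2,C_3$ and recurse.

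For the Hilbert--Poincar\'e series, the strategy is to compute $F_{\mathcal{Z}}(t)$ as the Hilbert series of the associated graded $\mathrm{gr}\,\mathcal{Z}$, which by the generation argument is the subalgebra of $S(sl_3)\cong \mathbb{C}[e_{ij}]/(\sum e_{ii})$ of zero-weight elements. This commutative algebra is the invariant ring for the action of the maximal torus $(\mathbb{C}^\times)^2$ on the eight-dimensional space $sl_3$, so its Hilbert series is obtained by Molien-type residue integration: one writes $F_{\mathcal{Z}}(t)$ as the constant term in the two torus variables of $\prod_{i\ne j}(1-t\,z^{\alpha_{ij}})^{-1}\cdot(1-t)^{-2}$, the last factor for the two Cartan coordinates. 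Evaluating this constant term (for instance by iterated residues) should yield a rational function, and one simplifies it to $\dfrac{1+t^{3}}{(1-t)^{2}(1-t^{2})^{3}(1-t^{3})}$. The shape of the answer is already suggestive: the denominator factor $(1-t)^2$ accounts for $H_1,H_2$, three factors $(1-t^2)$ for the quadratic generators $J,\overline{J}$ and $C_2$ (the three "length-two loops"), one factor $(1-t^3)$ for $C_3$, and the numerator $1+t^3$ signals that $\mathcal{Z}$ is a free module of rank two over the polynomial subalgebra generated by $\{H_1,H_2,J,\overline{J},C_2,C_3\}$, with basis $\{1,K\}$; this matches the Racah-type relations \eqref{Racah-sl3}, which express $J^3$, $\overline{J}^3$ and the mixed cubics in terms of $K$ and the degree-$\le 2$ generators, and which express $K^2$ (via the central element $\Gamma$ and \eqref{sRacah-sl3}) back in the polynomial subalgebra. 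So an alternative, more algebraic route is to prove directly that $\{1,K\}$ is a basis of $\mathcal{Z}$ over $R:=\mathbb{C}[H_1,H_2,J,\overline{J},C_2,C_3]$ and that $R$ is a polynomial ring, from which \eqref{HPZ} follows immediately.

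I would present the Molien/torus-invariant computation as the main line, since it is self-contained and does not require knowing the relations in advance, and then remark that freeness over $R$ with basis $\{1,K\}$ is consistent with it. The main obstacle is the generation statement rather than the series: showing that the seven named elements already capture every zero-weight PBW monomial requires carefully tracking the non-commutative corrections, i.e.\ verifying that the ideal of relations does not force additional generators in higher filtration degree. Concretely, one must check that the associated graded of $\mathcal{Z}$ is generated by the top symbols of $J,\overline{J},K,H_1,H_2,C_2,C_3$ — this is a finite verification in $S(sl_3)$ using the cone-of-roots description above — and then lift it through the PBW filtration; the routine but bulky part is confirming that no new invariant appears in degrees $2$ and $3$ beyond those symbols, which the torus-invariant Hilbert series computed independently then corroborates, closing the argument.
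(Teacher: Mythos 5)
Your overall architecture is sound and, for the generation statement, essentially the same as the paper's: you describe $\mathcal{Z}$ as the zero-weight span of PBW monomials, observe that the exponent semigroup is generated by $H_1,H_2$, the three quadratic pairs $e_{21}e_{12},\,e_{32}e_{23},\,e_{31}e_{13}$ and the two cubic loops, and then reduce to expressing these through $J,\overline{J},K,H_1,H_2,C_2,C_3$ by induction on the filtration. The paper does exactly this, but the decisive step is the set of five explicit identities that you defer with ``one then checks'', and your sketch of how that check goes is wrong in detail: the symmetric companion $e_{31}e_{12}e_{23}+e_{32}e_{21}e_{13}$ cannot be ``recovered from $\{J,\overline{J}\}$'' --- that element has symbol of degree $4$, so it cannot produce the missing degree-$3$ invariant; in the paper this combination is supplied by $C_3$ together with $J,\overline{J},C_2$ multiplied by Cartan elements. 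Likewise you have the roles of the Casimirs swapped: it is $C_2$ (not $C_3$) that yields $e_{32}e_{23}$ modulo $J,\overline{J}$ and Cartan terms. The needed identities do hold, so the gap is fixable, but as written the key verification is both missing and misdescribed.

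For the Hilbert--Poincar\'e series your route is genuinely different from the paper's. The paper simply counts the explicit spanning set $H_1^aH_2^b(e_{21}e_{12})^i(e_{32}e_{23})^j(e_{31}e_{13})^k x$ with $x$ a power of one of the two loops, notes linear independence, and sums $t^{\mathrm{deg}}$, which is elementary and self-contained. You instead propose a Molien/constant-term computation for the torus-invariant (zero-weight) part of $S(sl_3)$; this is legitimate since the filtration is torus-stable, so $\mathrm{gr}\,\mathcal{Z}$ is that invariant ring, namely $\mathbb{C}[H_1,H_2]\otimes\mathbb{C}[u_1,u_2,u_3,v_1,v_2]/(v_1v_2-u_1u_2u_3)$ with the $u$'s quadratic and $v$'s cubic, which indeed gives $(1+t^3)/\bigl((1-t)^2(1-t^2)^3(1-t^3)\bigr)$. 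What your approach buys is independence from any explicit basis and a conceptual explanation of the numerator $1+t^3$ (freeness of rank two over the polynomial subalgebra, with basis $\{1,K\}$, matching the paper's corollary); what it costs is that you have not actually evaluated the constant term, so in your write-up both halves of the proposition rest on computations announced but not performed.
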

\begin{proof}
A monomial $H_1^aH_2^b\,e_{21}^c\, e_{12}^d\, e_{32}^e\, e_{23}^f\, e_{31}^g\, e_{13}^h$
commutes with $H_1$ and $H_2$ if and only if:
\[\left\{\begin{array}{l}
2(d-c)+(e-f)+(h-g)=0\,,\\
(c-d)+2(f-e)+(h-g)=0\,,\end{array}\right.\qquad d-c=f-e=h-g\,.\]
Considering either $d-c\geq 0$ or $d-c\leq 0$, it follows that $\mathcal{Z}$ is spanned by
the following elements:
\begin{equation}\label{generatingset}
\{H_1^aH_2^b\bigl(e_{21}e_{12}\bigr)^i\bigl(e_{32}e_{23}\bigr)^j\bigl(e_{31}e_{13}\bigr)^kx\}_{a,b,i,j,k\geq 0}\,,\ \ \text{with}\ x\in\{\bigl(e_{12}e_{23}e_{31}\bigr)^{\ell},\bigl(e_{32}e_{21}e_{13}\bigr)^{\ell}\}_{\ell\geq0}\,.
\end{equation}
This shows that $\mathcal{Z}$ is generated as an algebra by:
\begin{equation}
 H_1\,,\ H_2\,,\ e_{21}e_{12}\,,\ e_{32}e_{23}\,,\ e_{31}e_{13}\,,\ e_{31}e_{12}e_{23}\,,\ e_{32}e_{21}e_{13}\,.
\end{equation}
By direct computations, one extracts
\begin{subequations}
\begin{align}
 &e_{21}e_{12}=J -H_1(H_1+1)\,,\\
 &e_{31}e_{13}= \overline J -(H_1+H_2)(H_1+H_2+1)\,,\\
 & e_{32}e_{23} =\tfrac{1}{2}C_2 -J- \overline{J}
  -H_2+\tfrac{1}{3}(2 H_1H_2+2H_1^2-H_2^2)  \,,\\
& e_{31}e_{12}e_{23}-e_{32}e_{21}e_{13}=K\,,\\
&  e_{31}e_{12}e_{23}  + e_{32}e_{21}e_{13}
 = \tfrac{1}{3}C_3-2J(H_1+H_2+1)-2\overline{J}H_1+\tfrac{1}{3}C_2(2H_1+ H_2)\nonumber\\
&\qquad +\tfrac{2}{27}(H_2+2H_1)(11 H_1^2+11H_1H_2-4H_2^2)
 -\tfrac{2}{3} (2H_1H_2- H_1^2+2H_2^2+2H_2-H_1)\,.
\end{align}
\end{subequations}
It follows easily that $\mathcal{Z}$ is generated by the elements in (\ref{generatorsZ}).

To calculate the Hilbert--Poincar\'e series, we sum the degrees of the elements in (\ref{generatingset}), which are linearly independent in $U(sl_3)$. Namely, one gets
\begin{align}
 F_{\mathcal{Z}}(t)=\sum_{a,b,i,j,k\geq 0}t^{a+b+2(i+j+k)}\left(1+2\sum_{\ell\geq 1}t^{3\ell}\right)
=\frac{1}{(1-t)^2(1-t^2)^3}\left(1+\frac{2t^3}{(1-t^3)}\right)\,,
\end{align}
which leads to the desired result.
\end{proof}

Recall that the elements $H_1,H_2,C_2,C_3$ are central in the subalgebra $\mathcal{Z}$. Moreover, we have found some generators of $\mathcal{Z}$ together with the relations (\ref{Racah-sl3}) and (\ref{sRacah-sl3}).
These relations allow to write any element of $\mathcal{Z}$ as a linear combination of
\begin{equation}\label{basis-Zc}
\{H_1^aH_2^bC_2^cC_3^d J^i \overline{J}^j K^k\}_{a,b,c,d,i,j\geq 0,\ k\in\{0,1\}}\,.
\end{equation}
This is true since we can reorder any product using (\ref{Racah-sl3}), the elements
$H_1,H_2,C_2,C_3$ are central, and $K^2$ is rewritten using the special relation
(\ref{sRacah-sl3}).

Thus, the above set is a spanning set of $\mathcal{Z}$, and comparing with the
Hilbert--Poincar\'e series found previously, we arrive at the following corollary.
\begin{coro}
The centralizer $\mathcal{Z}$ is isomorphic to the algebra generated by $J$,
$\overline{J}$, $K$ and the central elements $H_1,H_2,C_2,C_3$ with the defining relations
(\ref{Racah-sl3}) and (\ref{sRacah-sl3}). A basis is given by the set (\ref{basis-Zc}).
\end{coro}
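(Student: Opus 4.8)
The plan is to prove the final corollary by combining the two pieces of information already established: the explicit generating set (\ref{generatingset}) of $\mathcal{Z}$ with its known Hilbert--Poincar\'e series (\ref{HPZ}), and the relations (\ref{Racah-sl3}) and (\ref{sRacah-sl3}) satisfied by $J$, $\overline J$, $K$ inside $U(sl_3)$. Let $\mathcal{A}$ denote the abstract algebra generated by $J$, $\overline J$, $K$ together with central generators $H_1,H_2,C_2,C_3$, subject to the relations (\ref{Racah-sl3}) and (\ref{sRacah-sl3}) (with $a,b^\pm,\Gamma$ interpreted via the stated polynomial expressions in $H_1,H_2,C_2,C_3$, using $h=H_2$, $y=\tfrac13(2H_1+\cdots)$ rewritten in terms of $H_1,H_2$). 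The natural surjective algebra homomorphism $\pi\colon \mathcal{A}\to\mathcal{Z}$ sending each generator to the corresponding element of $U(sl_3)$ exists precisely because those elements satisfy the defining relations; surjectivity is the content of the Proposition just proved. The task is to show $\pi$ is injective.

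First I would argue that the set (\ref{basis-Zc}), namely $\{H_1^aH_2^bC_2^cC_3^dJ^i\overline J^jK^k\}$ with $k\in\{0,1\}$, spans $\mathcal{A}$. This is a purely formal reordering argument inside $\mathcal{A}$: any word in the generators can be brought to this form by (i) pulling the central elements $H_1,H_2,C_2,C_3$ to the left, (ii) using the two Racah relations (\ref{Racah-sl3}) to move every $\overline J$ to the right of every $J$ (each commutator $[J,\overline J]=K$, $[J,K]$, $[K,\overline J]$ is rewritten as a polynomial in $J,\overline J$ of no higher total degree plus lower-degree pieces — a standard PBW-style induction on degree works), and (iii) using the special relation (\ref{sRacah-sl3}) to replace any occurrence of $K^2$ by a polynomial in $J,\overline J$ and the central elements, so that at most one factor $K$ survives. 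Thus $\dim_{\mathbb{C}}\big(\mathcal{A}\big)_{\le N}\le \#\{(a,b,c,d,i,j,k): a+b+2c+3d+2i+2j+3k\le N,\ k\le 1\}$ for the induced filtration.

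Next I would compute the generating function of the right-hand side: it is exactly $\dfrac{1}{(1-t)^2(1-t^2)^3(1-t^3)}\big(1+t^3\big)$, since the degrees are $1,1,2,2,3$ for $H_1,H_2,C_2,C_3$ — wait, more carefully: $H_1,H_2$ contribute $\tfrac{1}{(1-t)^2}$, and $J,\overline J,C_2$ each have degree $2$ contributing $\tfrac{1}{(1-t^2)^3}$, $C_3$ has degree $3$ contributing $\tfrac{1}{1-t^3}$, and $K$ (degree $3$, exponent $0$ or $1$) contributes $(1+t^3)$. This matches $F_{\mathcal{Z}}(t)$ in (\ref{HPZ}) precisely. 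Therefore the spanning set (\ref{basis-Zc}) of $\mathcal{A}$ has, in each filtration degree, cardinality equal to $\dim \mathcal{Z}$ in that degree. Since $\pi$ is a surjection that maps the spanning set (\ref{basis-Zc}) of $\mathcal{A}$ onto the corresponding set in $\mathcal{Z}$, and that image set is linearly independent in $U(sl_3)$ (this linear independence is already asserted in the proof of the Proposition, as the elements (\ref{generatingset}) are linearly independent monomials in $U(sl_3)$ and (\ref{basis-Zc}) is a triangular reexpression of them), the spanning set of $\mathcal{A}$ must itself be linearly independent, hence a basis, and $\pi$ carries a basis to a basis: $\pi$ is an isomorphism.

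The main obstacle I anticipate is step (ii)--(iii): verifying rigorously that the reordering procedure inside the abstract algebra $\mathcal{A}$ actually terminates and produces elements in the claimed normal form without secretly using extra relations. One must run a careful induction on the total degree, checking that each application of a Racah relation or of the special relation strictly decreases an appropriate lexicographic statistic (e.g. number of ``out of order'' adjacent pairs $\overline J\cdots J$, with ties broken by the power of $K$); the cubic nature of the Racah relations means the right-hand sides involve degree-$2$ terms in $J,\overline J$ and so do not raise total degree, which is what makes the filtered dimension count work. Once the normal-form spanning property is in hand, the dimension comparison is immediate and forces injectivity; so the entire subtlety is concentrated in that combinatorial reordering lemma, which is routine but must be stated carefully.
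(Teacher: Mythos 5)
Your proposal follows the paper's own route for this corollary: use the relations (\ref{Racah-sl3}) and (\ref{sRacah-sl3}) to reorder any product into the normal form (\ref{basis-Zc}) (a degree-non-increasing rewriting), then compare the count of these monomials with the Hilbert--Poincar\'e series (\ref{HPZ}) to force linear independence, which simultaneously makes (\ref{basis-Zc}) a basis and the surjection from the abstractly presented algebra injective. The only blemish is your parenthetical claim that independence of the image of (\ref{basis-Zc}) is ``already asserted'' as a triangular reexpression of (\ref{generatingset}) --- the paper asserts no such thing, and you do not need it, since the images span $\mathcal{Z}$ and their cardinality in each filtration degree matches $F_{\mathcal{Z}}(t)$, so independence already follows from the dimension count you set up.
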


\section{Conclusion}\label{sec:conclusion}
The realization of the rank one Racah algebra as the centralizer in $U(\sl_3)$ of the Cartan
subalgebra naturally brings to mind its generalization to $U(\sl_n)$ for any $n$.
This suggests a way of realizing higher rank Racah algebras which is different from the
generalization related to tensor products of $U(\sl_2)$.
Centralizers of the Cartan subalgebras were studied in
\cite{CampoamorstursbergLatinietal2023} but a description of the resulting algebra is
still not known for arbitrary $n$.
Similar results for any simple Lie (super)-algebra should be also very interesting.
We have so far focused on finite-dimensional representations of the algebra and
obtained correspondingly finite families of functions.
Infinite-dimensional representations should provide similar results, involving
infinite families of functions.

Studying the cases associated to the quantum groups can also be envisaged. The results
of this paper should generalize: the centralizer of the Cartan subalgebra of $U_q(sl_3)$
should be associated to the Askey--Wilson algebra, the Krawtchouk polynomials should be
replaced by certain $q$-Krawtchouk polynomials (see \cite{GenestPostetal2016,
GenestPostetal2017}) and the Racah polynomials, by the $q$-Racah polynomials.
We plan on pursuing these questions.

\subsection*{Acknowledgments}

N.~Crampé and L.~Poulain d'Andecy are partially supported by Agence Nationale de la
Recherche Projet AHA ANR-18-CE40-0001 and by the IRP AAPT of CNRS.
J.~Gaboriaud held an Alexander-Graham-Bell scholarship from the
Natural Sciences and Engineering Research Council of Canada (NSERC),
received scholarships from the ISM and the Universit\'e de Montr\'eal
and is now supported by JSPS KAKENHI Grant Numbers 22F21320 and 22KF0189.
The research of L.~Vinet is funded in part by a Discovery Grant from NSERC.
\\

\subsection*{Data Availability}
Data sharing not applicable – no new data generated.

\printbibliography

\end{document}